\setlist[enumerate]{leftmargin=.5in}
\setlist[itemize]{leftmargin=.5in}
\else\usepackage{breakurl}\fi
\newcommand{\N}{{\mathbb{N}}}
\newcommand{\R}{{\mathbb{R}}}
\newcommand{\F}{{\mathbb{F}}}
\newcommand{\fk}{\mathfrak{f}}
\newcommand{\ek}{\mathfrak{e}}
\newcommand{\xk}{\mathfrak{x}}
\newcommand{\Dc}{\mathcal{D}}
\newcommand{\Ec}{\mathcal{E}}
\newcommand{\Fc}{\mathcal{F}}
\newcommand{\Nc}{\mathcal{N}}
\newcommand{\Rc}{\mathcal{R}}
\newcommand{\Pc}{\mathcal{P}}
\newcommand{\Qc}{\mathcal{Q}}
\newcommand{\Sc}{\mathcal{S}}
\newcommand{\Uc}{\mathcal{U}}
\newcommand{\Vc}{\mathcal{V}}
\newcommand{\Wc}{\mathcal{W}}
\newcommand{\Xc}{\mathcal{X}}
\DeclareMathOperator{\supp}{supp}
\DeclareMathOperator{\im}{im}
\DeclareMathOperator{\rank}{rank}
\DeclareMathOperator{\divg}{div}
\DeclareMathOperator{\grad}{grad}
\DeclareMathOperator{\dom}{dom}
\DeclareMathOperator{\loc}{loc}
\newcommand{\ddt}{{\frac{\mathrm{d}}{\mathrm{d}t}}}
\newcommand{\ddts}{{\tfrac{\mathrm{d}}{\mathrm{d}t}}}
\newcommand{\pddt}{{\textstyle\frac{\partial}{\partial t}}}
\newcommand{\pddxi}[1]{{\textstyle\frac{\partial^{#1}}{\partial \xi^{#1}}}}
\newcommand{\setdef}[2]{\left\{ \, #1 \,\left\vert\vphantom{#1} \, #2 \, \right.\right\}}
\newcommand{\spvek}[2]{\left(\begin{smallmatrix}#1\\#2\end{smallmatrix}\right)}
\newcommand{\llangle}{\langle\!\langle}
\newcommand{\rrangle}{\rangle\!\rangle}
\DeclarePairedDelimiterX{\sdprod}[2]{\llangle}{\rrangle}{#1,#2}
\newcommand{\Lp}[1]{\mathrm{L}^{#1}} % Lp spaces
\newcommand{\Wkp}[1]{\mathrm{W}^{#1}} % sobolov spaces
\newcommand{\conC}{\mathrm{C}} % C for space of continuous functions
\newcommand{\Hk}[1]{\mathrm{H}^{#1}} % sobolev spaces in L2
\DeclareFontFamily{OMX}{MnSymbolE}{}
\DeclareSymbolFont{MnLargeSymbols}{OMX}{MnSymbolE}{m}{n}
\DeclareFontShape{OMX}{MnSymbolE}{m}{n}{
    <-6>  MnSymbolE5
   <6-7>  MnSymbolE6
   <7-8>  MnSymbolE7
   <8-9>  MnSymbolE8
   <9-10> MnSymbolE9
  <10-12> MnSymbolE10
  <12->   MnSymbolE12
}{}
\DeclareFontShape{OMX}{MnSymbolE}{b}{n}{
    <-6>  MnSymbolE-Bold5
   <6-7>  MnSymbolE-Bold6
   <7-8>  MnSymbolE-Bold7
   <8-9>  MnSymbolE-Bold8
   <9-10> MnSymbolE-Bold9
  <10-12> MnSymbolE-Bold10
  <12->   MnSymbolE-Bold12
}{}
\let\llangle\@undefined
\let\rrangle\@undefined
\DeclareMathDelimiter{\llangle}{\mathopen}%
                     {MnLargeSymbols}{'164}{MnLargeSymbols}{'164}
\DeclareMathDelimiter{\rrangle}{\mathclose}%
                     {MnLargeSymbols}{'171}{MnLargeSymbols}{'171}
\title{Weak solutions of %nonlinear infinite-dimensional 
port-Hamiltonian systems\thanks{The author gratefully acknowledges funding from the Deutsche Forschungsgemeinschaft (DFG,
German Research Foundation), Project-ID 531152215, CRC 1701 ``Port-Hamiltonian Systems''.}}
\author{Timo Reis\thanks{Institut f\"ur Mathematik, Technische Universit\"at Ilmenau, Weimarer Stra{\ss}e 25, 98693 Ilmenau, Germany 
  (\email{timo.reis@tu-ilmenau.de}).}}
\begin{document}

\maketitle

% Abstract
\begin{abstract}
We consider port-Hamiltonian systems from a geometric perspective, where the quantities involved such as state, flows, and efforts evolve in (possibly infinite-dimensional) Banach spaces.
The main contribution of this article is the introduction of a weak solution concept. In this framework we show that the derivative of the state naturally lives in a space that, for ordinary evolution equations, plays the role of an extrapolation space.
Through examples, we demonstrate that this approach is consistent with the weak solution framework commonly used for partial differential equations.\end{abstract}

% Keywords and AMS subject classifications
\begin{keywords}
Port-Hamiltonian systems, differential-algebraic equations, infinite-dimensional systems, partial differential equations, partial differential-algebraic equations, Dirac structures, resistive relations, weak solutions
\end{keywords}

\begin{AMS}
34A09, 35D30, 37J39, 53D12, 93C25
\end{AMS}

\section{Introduction}
\noindent

Port-Hamiltonian system models cover a broad class of nonlinear physical systems \cite{JvdS14, vdS17}. They originate from port-based network modeling of complex dynamical systems across various physical domains, including mechanical multibody systems and electrical circuits \cite{JvdS14, vdS13,BergerHochdahlReisSeifried25,GeHaReVdS20,DuindamMacchelliStramigioliBruyninckx09}. A key feature of these systems, beyond maintaining energy balance, is their modular structure, which is rooted in power- and structure-preserving coupling theory \cite{JvdS14, CvdSB07, BKvdSZ10, SkJaEh23}.  
These systems are characterized by three fundamental components:  
{\em Dirac structures}, which ensure power preservation and energy routing; the 
{\em Hamiltonian}, which describes energy storage; and 
{\em dissipative relations}, which account for power dissipation.
The port-Hamiltonian framework provides a unified modeling approach, particularly for constrained systems, leading to differential-algebraic equations (DAEs) \cite{vdS10, BMXZ18, MMW18, MvdS18, MvdS20, vdS13, VvdS10a, GeHaRe20,BergerHochdahlReisSeifried25,MS22a, MS23}.

Another major research direction focuses on port-Hamiltonian systems governed by partial differential equations (PDEs), where the state typically depends on spatial variables, making these systems inherently infinite-dimensional.
Initial insights arose from differential geometric approaches \cite{MvdS04a, MvdS04b, MvdS02}, followed by significant progress in functional analytic methods, particularly in \cite{Vill07, JZ12, ReSt21, Rei21, PhilReis23, Skr21Th, Skr21, KuZw15, JK19, JMZ19, JKZ21, JS21,Rei21}.

Port-Hamiltonian systems generally lead to differential inclusions, which, in important special cases, reduce to ordinary differential equations (ODEs), PDEs, DAEs, or, in an even more general case, partial differential-algebraic equations (PDAEs).
These differential inclusions are typically formulated in a strong sense, and a connection to weak solution theory has not yet been established. This article addresses precisely this gap: we leverage the self-orthogonality property of the Dirac structure (with respect to a certain indefinite inner product) to develop a weak solution concept. Through examples, we demonstrate that this approach aligns with the weak solution framework commonly used for PDEs.

To clarify from the outset, we do not address the question of existence of solutions in this article. The reason is that port-Hamiltonian systems in geometric form fall within the ``behavioral approach'' in the sense of \cite{PoldermanWillems1998}, which focuses on trajectories rather than systems with explicitly defined inputs and outputs. The definition of inputs and outputs, and the subsequent analysis of existence of solutions, is simply a separate line of investigation.

However, we show that the proposed solution concept is relevant not only for infinite-dimensional systems but also for the finite-dimensional case. This is due to two reasons. First, the weak solution concept naturally accommodates discontinuous trajectories at the external ports, as occur for instance in mechanical or electrical systems when an external force or voltage is switched on or off. Second, the weak solution concept ensures that components of the state which are annihilated by the Dirac structure, in the sense that parts of the state derivative do not actually enter the equations, are not required to be differentiable.

Background on port-Hamiltonian systems, their solution concepts, and the relations among them is provided in \Cref{sec:pHsys_const}.
The basic idea for our weak solution concept is relatively straightforward: the differential inclusion is formally multiplied by a test function taking values in the Dirac structure. After integration in time, the derivative of the state is transferred to the test function. We further show that a part of the state is indeed weakly differentiable when regarded as a trajectory in a larger space. This extends the concept of the extrapolation space, which is well established in the context of evolution equations \cite[Sec.~2.10]{TuWe09}.
In \Cref{sec:enbal}, we then turn to the energy balance. Under mild additional assumptions on the underlying Banach spaces and square-integrability of the trajectories, we show that the difference of the Hamiltonian between initial and final times (the stored energy gained) can be estimated by the supplied energy, that is, the $\Lp{2}$ inner product of the flow and effort variables at the external ports.
Finally, our theory is applied to two infinite-dimensional examples in \Cref{sec:Ex}.

\subsection{Notation}

Throughout this article, $\N$ stands for the set of natural numbers including zero. All spaces are assumed to be real. The norm in a~Banach space $\Xc$ is denoted
by $\|\cdot\|_{\Xc}$, and we neglect the subindex indicating the space, if this is clear from the context. The symbol $\Xc'$ denotes the topological dual of $\Xc$. The duality pairing between $x \in \Xc$ and $x' \in \Xc'$ (i.e., the evaluation of $x'$ at $x$) is denoted by $\langle x, x' \rangle_{\Xc, \Xc'}$. As before, we omit the subscript specifying the spaces when it is clear from the context. For a Banach space $ \Xc $ and a closed subspace $\Nc \subset \Xc$, the quotient space $\Xc/\Nc = \setdef{x+\Nc}{x\in\Xc}$ is equipped with the norm  
\begin{equation}
\|x+\Nc\|_{\Xc/\Nc} = \inf\setdef{\|x+y\|_{\Xc}}{y\in\Nc}, \label{eq:factorspacenorm}
\end{equation}  
and is a Banach space \cite[Thm.~1.41]{Rudi73}.

For $k \in \N \cup \{\infty\}$, a Banach space $\Xc$, and an interval $I \subset \R$, we define  
\begin{align*}
    \conC^k(I;\Xc) &:= \setdef{f:I\to\Xc}{f\text{ is } k \text{ times continuously differentiable}},\\
    \conC^k_0(I;\Xc) &:= \setdef{f\in \conC^k(I;\Xc)}{\supp f\subset\mathring{I}\text{ is compact}},
\end{align*}
where $\supp f$ denotes the support of $f$, and $\mathring{I}$ is the interior of $I$. Furthermore, we follow the notation used in {\sc Adams} \cite{Adam03} for Lebesgue and Sobolev spaces. To indicate that a function space consists of functions taking values in a Banach space $\Xc$, we append ``$;\Xc$'' after specifying the spatial or temporal domain. For instance, the Lebesgue space of square-integrable $\Xc$-valued functions on the domain $\Omega\subset\R^d$ is denoted by $\Lp{2}(\Omega;\Xc)$. 
Throughout this article, integration of $\Xc$-valued functions is always understood in the Bochner sense \cite{Diestel77}.

\section{Port-Hamiltonian systems an their solution concepts}\label{sec:pHsys_const}

The fundamental ingredient of port-Hamiltonian systems is the Dirac structure.

\begin{definition}[Dirac structure]\label{def-Dir}%[\!{\cite[Def.~1.1]{Cou90}\cite[3. Dirac structure]{Dor87}}]
For a~Banach space $\Fc$, a subspace $\mathcal D \subset \Fc\times \Fc'$ is called a \emph{Dirac structure}, if
for all $\fk\in\Fc$, $\ek\in\Fc'$, we have
\begin{equation*}%\label{lag-cond}
(\fk,\ek)\in \mathcal D\;\Longleftrightarrow\; \forall \, (\hat{\fk},\hat{\ek})\in \mathcal D:\; \langle \hat{\fk},\ek\rangle+\langle \fk,\hat{\ek}\rangle=0.
\end{equation*}
\end{definition}
For $(\fk,\ek)\in\Dc$, we call $\ek$ an {\em effort} and $\fk$ is termed a \emph{flow}. By equipping the so-called {\em Bond space} $\Fc\times \Fc'$ %$\Xc \times \Xc'$ 
with the indefinite and non-degenerate inner product %defined as
\begin{equation}
\begin{aligned}
\langle\!\langle\cdot,\cdot\rangle\!\rangle\;:&&\big(\Fc\times\Fc'\big)\times\big(\Fc\times\Fc'\big)&\to\R,\\
&&\big((\fk_1,\ek_1),(\fk_2,\ek_2)\big)&\mapsto \langle {\fk}_1,\ek_2\rangle+\langle \fk_2,{\ek}_1\rangle,
\end{aligned}\label{eq:Diracinprod}\end{equation}
we can conclude that $\Dc\subset\Fc\times\Fc'$ is a Dirac structure if, and only if, $\Dc=\Dc^{\bot\!\!\!\bot}$, where the latter denotes the orthogonal complement of $\Dc$ with respect to $\langle\!\langle\cdot,\cdot\rangle\!\rangle$. In particular, any Dirac structure is a~closed subspace of $\Fc\times\Fc'$.

\begin{remark}\label{rem:fin1}
In the case $\Fc=\R^n$, we can identify $\Fc'=\R^n$ by using the Euclidean inner product as the duality pairing. Any Dirac structure $\Dc\subset\Fc\times\Fc'\cong \R^{2n}$ is then $n$-dimensional. Moreover, for matrices $K,L\in\R^{n\times n}$, the subspace $\Dc=\im \bigl[\begin{smallmatrix} K \\ L\end{smallmatrix}\bigr]$ is a Dirac structure if and only if
$\rank [K, \, L]=n$ and $K^\top L+L^\top K=0$; see~\cite[Prop.~1.1.5]{Cou90}.

Similarly, for matrices $F,G\in\R^{n\times n}$, the subspace $\Dc=\ker[F,\,G]$ is a Dirac structure if and only if $\rank[F,\,G]=n$ and $FG^\top+GF^\top=0$ \cite{MvdS18}. In this case, one concludes that
\begin{equation}
\Dc=\ker[F,\,G]=\im\left[\begin{smallmatrix} G^\top \\ F^\top\end{smallmatrix}\right].\label{eq:Dirimker}\end{equation}
\end{remark}

We are now ready to introduce port-Hamiltonian systems. In addition to a Dirac structure, these systems also include Hamiltonians and modulated resistive relations, whose definition is included in the following one. 
As mentioned in the introduction, we focus here on a specialized version of port-Hamiltonian systems, while a more general class will be discussed in the next section. Here, we note that for a Fr\'echet differentiable mapping $\mathcal{H}: U \subset \Xc \to \R$, its derivative is denoted by $\mathcal{H}'$ and maps from $U$ to $\Xc'$.

\begin{definition}[Port-Hamiltonian system]\label{def-pH}
Let $\Xc$, $\Fc_\Rc$, $\Fc_\Pc$ be Banach spaces.
A \emph{port-Hamiltonian system} is a~differential inclusion
\begin{equation}
 \left(\begin{pmatrix}-\dot x(t)\\f_\Rc(t)\\f_\Pc(t)\end{pmatrix},\begin{pmatrix}\mathcal{H}'(x(t))\\e_\Rc(t)\\e_\Pc(t)\end{pmatrix}\right)\in\mathcal D,\quad
  (f_\Rc(t),e_\Rc(t))\in\mathcal R,\label{eq:phinc}%(f_\Rc(t),e_\Rc(t))\in\mathcal R_{x(t)},\label{eq:phinc}
\end{equation}
where, for $U\subset\Xc$ open,
 \begin{itemize}
 \item  $\mathcal D\subset (\Xc\times\Fc_{\Rc}\times \Fc_\Pc)\times(\Xc'\times\Fc_{\Rc}'\times \Fc_\Pc')$ is a~Dirac structure,
 \item $\mathcal H:U\to\R$ is continuously Fr\'echet differentiable (the {\em Hamiltonian}), and
  \item $\mathcal \Rc\subset\mathcal \Fc_\Rc\times\mathcal \Fc_\Rc'$ is a~{\em modulated resistive relation}. That is, %fulfills %$\Rc=(\mathcal \Rc_\xk)_{\xk\in U}$ with $\mathcal \Rc_\xk\subset\mathcal \Fc_\Rc\times\mathcal \Fc_\Rc'$ fulfills
  \[\forall \,(\fk_\Rc,\ek_\Rc)\in\Rc:\quad \langle \fk_\Rc, \ek_\Rc\rangle\leq0.\]%\quad\qquad\text{(a~{\em resistive relation}).}\]
%  \[\forall\, \mathfrak{x}\in U:\;(\fk_\Rc,\ek_\Rc)\in\Rc_\xk:\quad \langle \fk_\Rc, \ek_\Rc\rangle\leq0\quad\qquad\text{(a~{\em modulated resistive relation}).}\]
\end{itemize}
\end{definition}
%Note that, if a~modulated resistive relation $\Rc=(\mathcal \Rc_\xk)_{\xk\in U}$ is independent on $x$, then it is called {\em resistive relation}.
The variables in a port-Hamiltonian system are named as follows: $x(t)$ is called the \emph{state}, $\mathcal{H}'(x(t))$ the \emph{co-energy variable}, and $f_\Rc(t)$, $e_\Rc(t)$, $f_\Pc(t)$, and $e_\Pc(t)$ are respectively referred to as \emph{resistive/external flows/efforts}. 

Classical solutions of \eqref{eq:phinc} are functions defined on an interval that satisfy \eqref{eq:phinc} pointwise, as stated below.
\begin{definition}[Classical solution for port-Hamiltonian systems]\label{def:sol_class}
Let $I \subset \R$ be an interval. Given the preliminaries and notation outlined in \Cref{def-pH}, we say that $(x, f_\Rc, f_\Pc, e_\Rc, e_\Pc)$ is a
 {\em classical solution of \eqref{eq:phinc} on $I$}, if 
 \[x\in \conC^1(I;\Xc),\quad f_\Rc\in\conC(I;\Fc_\Rc),\quad e_{\Rc}\in\conC(I;\Fc_\Rc'),\quad f_\Pc\in\conC(I;\Fc_\Pc),\quad e_{\Pc}\in\conC(I;\Fc_\Pc'),\] and \eqref{eq:phinc} holds for all $t\in I$.
\end{definition}
Now we introduce the weak solution concept. This is achieved by testing with smooth functions with values in the Dirac structure.
\begin{definition}[Weak solutions for port-Hamiltonian systems]\label{def:pH1}
Let $I \subset \R$ be an interval. Given the preliminaries and notation outlined in \Cref{def-pH}, we say that $(x, f_\Rc, f_\Pc, e_\Rc, e_\Pc)$ is a
 {\em weak solution of \eqref{eq:phinc} on $I$}, if the following holds:
\begin{enumerate}[label=(\alph*)]
    \item $x:I\to \Xc$ is continuous;
    \item $f_\Rc\in \Lp{1}_{\loc}(I; \Fc_\Rc)$,\; $e_\Rc\in \Lp{1}_{\loc}(I; \Fc_\Rc')$,\;
    $f_\Pc\in \Lp{1}_{\loc}(I; \Fc_\Pc)$,\; $e_\Pc\in \Lp{1}_{\loc}(I; \Fc_\Pc')$,
    \item\label{def:pH13} $(f_\Rc(t),e_\Rc(t))\in \Rc$ for almost all $t\in I$, and
    \item for the indefinite inner product as in \eqref{eq:Diracinprod} with $\Fc=\Xc\times \Fc_\Rc\times\Fc_\Pc$, it holds that
\begin{multline*}
    \forall \chi\in \conC_0^1(I;\Xc),\;\varrho\in \conC_0^1(I;\Xc'),\; \chi_\Rc\in \conC_0^1(I;\Fc_\Rc),\\\varrho_\Rc\in \conC_0^1(I;\Fc_\Rc'),
    \; \chi_\Pc\in \conC_0^1(I;\Fc_\Pc),\;\varrho_\Pc\in \conC_0^1(I;\Fc_\Pc'),
    \\ \text{ with }\left(\begin{pmatrix}\chi(t)\\\chi_{\Rc}(t)\\\chi_{\Pc}(t)
    \end{pmatrix},\begin{pmatrix}\varrho(t)\\\varrho_{\Rc}(t)\\\varrho_{\Pc}(t)
    \end{pmatrix}\right)\in\Dc\quad\forall\,t\in I\,:\\
    \int_I 
    \left\langle\!\left\langle \left(\begin{pmatrix}x(t)\\f_{\Rc}(t)\\f_{\Pc}(t)
    \end{pmatrix},\begin{pmatrix}\mathcal{H}'(x(t))\\e_{\Rc}(t)\\e_{\Pc}(t)
    \end{pmatrix}\right),\left(\begin{pmatrix}\chi(t)\\\chi_{\Rc}(t)\\\chi_{\Pc}(t)
    \end{pmatrix},\begin{pmatrix}\ddt \varrho(t)\\\varrho_{\Rc}(t)\\\varrho_{\Pc}(t)
    \end{pmatrix}\right)\right\rangle\!\right\rangle
{\rm d}t=0.
\end{multline*}
\end{enumerate} 
\end{definition}

Next we prove that weak solutions truly generalize classical solutions.
\begin{theorem}\label{thm:weakclass}
    Let $I \subset \R$ be an interval. Given the preliminaries and notation outlined in \Cref{def-pH}. Then the following holds:
    \begin{enumerate}[label=(\alph*)]
        \item\label{thm:weakclass1} If $(x, f_\Rc, f_\Pc, e_\Rc, e_\Pc)$ is a
 classical solution of \eqref{eq:phinc} on $I$, then it is a~weak solution of \eqref{eq:phinc} on $I$.
\item\label{thm:weakclass2} If $(x, f_\Rc, f_\Pc, e_\Rc, e_\Pc)$ is a
 weak solution of \eqref{eq:phinc} on $I$ with, additionally $\Rc\subset \Fc_\Rc\times\Fc_\Rc'$ is closed, and
%\begin{equation}
%    \setdef{(x,f_\Rc,e_\Rc)\in U\times\Fc_\Rc\times\Fc_\Rc'}{(f_\Rc,e_\Rc)\in\Rc_x}\label{eq:resclosed}
%\end{equation}is relatively closed in $U\times\Fc_\Rc\times\Fc_\Rc'$, and
\[x\in \conC^1(I;\Xc),\quad f_\Rc\in\conC(I;\Fc_\Rc),\quad e_{\Rc}\in\conC(I;\Fc_\Rc'),\quad f_\Pc\in\conC(I;\Fc_\Pc),\quad e_{\Pc}\in\conC(I;\Fc_\Pc'),\] 
then
$(x, f_\Rc, f_\Pc, e_\Rc, e_\Pc)$ is a
 classical solution of \eqref{eq:phinc} on $I$.
    \end{enumerate}
\end{theorem}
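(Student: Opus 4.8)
For part (a), the strategy is to take a classical solution and verify that it satisfies the defining conditions (a)--(d) of Definition 2.8. Conditions (a), (b), (c) are immediate from the regularity assumptions and from the pointwise inclusion $(f_\Rc(t),e_\Rc(t))\in\Rc$ that comes with a classical solution. The only thing requiring work is condition (d). Here I would fix admissible test functions $\chi,\varrho,\chi_\Rc,\varrho_\Rc,\chi_\Pc,\varrho_\Pc$ whose pointwise values $((\chi(t),\chi_\Rc(t),\chi_\Pc(t)),(\varrho(t),\varrho_\Rc(t),\varrho_\Pc(t)))$ lie in $\Dc$ for all $t$, and compute the integrand. The key manipulation is to use the definition of $\sdprod{\cdot}{\cdot}$ and integration by parts in $t$ to move the derivative from $\varrho$ back onto the test function: since $\varrho\in\conC_0^1(I;\Xc')$ and $\langle x(t),\ddt\varrho(t)\rangle = \ddt\langle x(t),\varrho(t)\rangle - \langle\dot x(t),\varrho(t)\rangle$ (valid because $x\in\conC^1(I;\Xc)$ and the duality pairing is bilinear and continuous), the total-derivative term integrates to zero by compact support. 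What remains is, pointwise in $t$, exactly $\sdprod{((-\dot x(t),f_\Rc(t),f_\Pc(t)),(\mathcal H'(x(t)),e_\Rc(t),e_\Pc(t)))}{((\chi(t),\chi_\Rc(t),\chi_\Pc(t)),(\varrho(t),\varrho_\Rc(t),\varrho_\Pc(t)))}$, up to sign bookkeeping. Since the first argument lies in $\Dc$ (classical solution) and the second lies in $\Dc$ (hypothesis on the test functions), self-orthogonality $\Dc=\Dc^{\bot\!\!\!\bot}$ forces this pairing to vanish for every $t$, hence the integral vanishes. So (d) holds.

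For part (b), I would run the argument in reverse. Given a weak solution with the stated continuity/$\conC^1$ regularity, conditions (b) and (c) of the classical-solution definition are already in the hypothesis, and (c) combined with closedness of $\Rc$ upgrades ``for almost all $t$'' to ``for all $t$'': the map $t\mapsto(f_\Rc(t),e_\Rc(t))$ is continuous into $\Fc_\Rc\times\Fc_\Rc'$, it lands in the closed set $\Rc$ on a dense subset of $I$, hence everywhere on $I$. It remains to show the Dirac inclusion in (1) holds for every $t\in I$. Using the same integration-by-parts identity as in part (a) (now legitimate because $x\in\conC^1$), condition (d) of the weak solution becomes: for all admissible Dirac-valued test tuples, $\int_I \sdprod{z(t)}{w(t)}\,\mathrm dt = 0$, where $z(t)=((-\dot x(t),f_\Rc(t),f_\Pc(t)),(\mathcal H'(x(t)),e_\Rc(t),e_\Pc(t)))$ and $w(t)$ runs over $\conC_0^1$ curves in $\Dc$. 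The goal is to conclude $\sdprod{z(t)}{w}=0$ for every fixed $t\in I$ and every $w\in\Dc$, which by self-orthogonality gives $z(t)\in\Dc^{\bot\!\!\!\bot}=\Dc$, i.e.\ \eqref{eq:phinc}.

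The main obstacle is this last localization step: passing from ``the time-integral of $\sdprod{z(t)}{w(t)}$ vanishes for all $\Dc$-valued $\conC_0^1$ test curves $w$'' to ``$\sdprod{z(t_0)}{w_0}=0$ for each fixed $t_0$ and each fixed $w_0\in\Dc$.'' The idea is the usual fundamental-lemma-of-calculus-of-variations localization, but one must respect the constraint that the test curve stays in the subspace $\Dc$ at every time. I would handle it by choosing test curves of product form $w(t)=\phi(t)\,w_0$ with $\phi\in\conC_0^1(I;\R)$ scalar and $w_0\in\Dc$ fixed; since $\Dc$ is a linear subspace, $\phi(t)w_0\in\Dc$ for all $t$, and such $w$ is admissible (its two ``halves'' are $\conC_0^1$ with values in the respective spaces). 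Then condition (d) reads $\int_I \phi(t)\,g(t)\,\mathrm dt=0$ for all scalar $\phi\in\conC_0^1(I;\R)$, where $g(t):=\sdprod{z(t)}{w_0}$. Here $g$ is continuous in $t$ because $\dot x$, $f_\Rc$, $f_\Pc$, $\mathcal H'\circ x$ (the latter by continuity of $\mathcal H'$ and of $x$), $e_\Rc$, $e_\Pc$ are all continuous and the pairings are continuous bilinear. The classical fundamental lemma then yields $g\equiv 0$ on $I$. As $w_0\in\Dc$ was arbitrary, $z(t)\in\Dc^{\bot\!\!\!\bot}=\Dc$ for all $t\in I$, which together with the already-established $(f_\Rc(t),e_\Rc(t))\in\Rc$ for all $t$ shows $(x,f_\Rc,f_\Pc,e_\Rc,e_\Pc)$ is a classical solution. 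One small point to check in passing is that the integration-by-parts manipulation is consistent with the decomposition $\sdprod{\cdot}{\cdot}$ in \eqref{eq:Diracinprod}, namely that only the $\varrho$-slot carries a time derivative and the $\chi_\Rc,\chi_\Pc,\varrho_\Rc,\varrho_\Pc$ slots do not, so that no boundary terms or extra derivatives arise from the resistive/external components.
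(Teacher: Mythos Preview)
Your proposal is correct and follows essentially the same route as the paper's proof: integration by parts to exchange $\langle x(t),\ddt\varrho(t)\rangle$ for $-\langle\dot x(t),\varrho(t)\rangle$, the self-orthogonality $\Dc=\Dc^{\bot\!\!\!\bot}$ to make the integrand vanish pointwise in part~(a), and in part~(b) the same integration by parts together with product-form test functions $\phi(t)\,w_0$ (with $\phi\in\conC_0^1(I)$ scalar and $w_0\in\Dc$ fixed) plus the fundamental lemma of the calculus of variations to localize, followed by the closedness-of-$\Rc$ argument to upgrade the resistive inclusion from almost every $t$ to every $t$. The paper carries out exactly these steps in the same order.
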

\begin{proof}\
\begin{enumerate}[label=(\alph*)]
\item If $(x, f_\Rc, f_\Pc, e_\Rc, e_\Pc)$ is a
 classical solution of \eqref{eq:phinc} on $I$, then, clearly, all components are locally integrable, and  $(f_\Rc(t),e_\Rc(t))\in \Rc$ for almost all (even all) $t\in I$. Further, for all
$\chi\in \conC_0^1(I;\Xc)$, $\varrho\in \conC_0^1(I;\Xc')$, $\chi_\Rc\in \conC_0^1(I;\Fc_\Rc)$, $\varrho_\Rc\in \conC_0^1(I;\Fc_\Rc')$,
    $\chi_\Pc\in \conC_0^1(I;\Fc_\Pc)$, $\varrho_\Pc\in \conC_0^1(I;\Fc_\Pc')$ with \begin{equation}
    \left(\begin{pmatrix}\chi(t)\\\chi_{\Rc}(t)\\\chi_{\Pc}(t)
    \end{pmatrix},\begin{pmatrix}\varrho(t)\\\varrho_{\Rc}(t)\\\varrho_{\Pc}(t)
    \end{pmatrix}\right)\in\Dc\quad\forall\, t\in I,\label{eq:testfun}
    \end{equation}
we obtain by integration by parts that 
\begin{multline*}
0=        \int_I 
    \left\langle\!\left\langle \left(\begin{pmatrix}-\ddt x(t)\\f_{\Rc}(t)\\f_{\Pc}(t)
    \end{pmatrix},\begin{pmatrix}\mathcal{H}'(x(t))\\e_{\Rc}(t)\\e_{\Pc}(t)
    \end{pmatrix}\right),\left(\begin{pmatrix}\chi(t)\\\chi_{\Rc}(t)\\\chi_{\Pc}(t)
    \end{pmatrix},\begin{pmatrix}\varrho(t)\\\varrho_{\Rc}(t)\\\varrho_{\Pc}(t)
    \end{pmatrix}\right)\right\rangle\!\right\rangle
{\rm d}t
\\=        \int_I 
    \left\langle\!\left\langle \left(\begin{pmatrix}x(t)\\f_{\Rc}(t)\\f_{\Pc}(t)
    \end{pmatrix},\begin{pmatrix}\mathcal{H}'(x(t))\\e_{\Rc}(t)\\e_{\Pc}(t)
    \end{pmatrix}\right),\left(\begin{pmatrix}\chi(t)\\\chi_{\Rc}(t)\\\chi_{\Pc}(t)
    \end{pmatrix},\begin{pmatrix}\ddt \varrho(t)\\\varrho_{\Rc}(t)\\\varrho_{\Pc}(t)
    \end{pmatrix}\right)\right\rangle\!\right\rangle
{\rm d}t.
\end{multline*}
\item Combining the closedness of $\Rc$ with the continuity of $f_\Rc$ and $e_\Rc$, we conclude that the requirement $(f_\Rc(t),e_\Rc(t))\in\Rc$ for almost all $t\in I$ in fact guarantees that $(f_\Rc(t),e_\Rc(t))\in\Rc$ for \underline{all} $t\in I$. Further, continuous differentiability of $x:I\to\Xc$ yields that we can perform integration by parts to see that, for all
$\chi\in \conC_0^1(I;\Xc)$, $\varrho\in \conC_0^1(I;\Xc')$, $\chi_\Rc\in \conC_0^1(I;\Fc_\Rc)$, $\varrho_\Rc\in \conC_0^1(I;\Fc_\Rc')$,
    $\chi_\Pc\in \conC_0^1(I;\Fc_\Pc)$, $\varrho_\Pc\in \conC_0^1(I;\Fc_\Pc')$
with \eqref{eq:testfun}, it holds that
\begin{multline*}
0=        \int_I 
    \left\langle\!\left\langle \left(\begin{pmatrix}x(t)\\f_{\Rc}(t)\\f_{\Pc}(t)
    \end{pmatrix},\begin{pmatrix}\mathcal{H}'(x(t))\\e_{\Rc}(t)\\e_{\Pc}(t)
    \end{pmatrix}\right),\left(\begin{pmatrix} \chi(t)\\\chi_{\Rc}(t)\\\chi_{\Pc}(t)
    \end{pmatrix},\begin{pmatrix}\ddt\varrho(t)\\\varrho_{\Rc}(t)\\\varrho_{\Pc}(t)
    \end{pmatrix}\right)\right\rangle\!\right\rangle
{\rm d}t\\=       \int_I 
    \left\langle\!\left\langle \left(\begin{pmatrix}-\ddt x(t)\\f_{\Rc}(t)\\f_{\Pc}(t)
    \end{pmatrix},\begin{pmatrix}\mathcal{H}'(x(t))\\e_{\Rc}(t)\\e_{\Pc}(t)
    \end{pmatrix}\right),\left(\begin{pmatrix}\chi(t)\\\chi_{\Rc}(t)\\\chi_{\Pc}(t)
    \end{pmatrix},\begin{pmatrix}\varrho(t)\\\varrho_{\Rc}(t)\\\varrho_{\Pc}(t)
    \end{pmatrix}\right)\right\rangle\!\right\rangle
{\rm d}t.\end{multline*}
Now applying test functions of type
\[\left(\begin{pmatrix}\chi(t)\\\chi_{\Rc}(t)\\\chi_{\Pc}(t)
    \end{pmatrix},\begin{pmatrix}\varrho(t)\\\varrho_{\Rc}(t)\\\varrho_{\Pc}(t)
    \end{pmatrix}\right)=\underbrace{\delta(t)}_{\in \conC^1_0(I)}\cdot\underbrace{\left(\begin{pmatrix}\fk\\\fk_\Rc\\\fk_\Pc
    \end{pmatrix},\begin{pmatrix}\ek\\\ek_\Rc\\\ek_\Pc
    \end{pmatrix}\right)}_{\in\Dc},\]
we obtain from the fundamental lemma of calculus of variations \cite[Thm.\ 6.3-2]{Ciarlet2013} that
    \begin{multline*}
    \forall\,t\in I,\;\left(\begin{pmatrix}\fk\\\fk_{\Qc}\\\fk_\Pc
    \end{pmatrix},\begin{pmatrix}\ek\\\ek_\Pc\\\ek_\Pc
    \end{pmatrix}\right)\in\Dc:\\
    \left\langle\!\left\langle \left(\begin{pmatrix}-\ddt x(t)\\f_{\Rc}(t)\\f_{\Pc}(t)
    \end{pmatrix},\begin{pmatrix}\mathcal{H}'(x(t))\\e_{\Rc}(t)\\e_{\Pc}(t)
    \end{pmatrix}\right),\left(\begin{pmatrix}\fk\\\fk_\Rc\\\fk_\Pc
    \end{pmatrix},\begin{pmatrix}\ek\\\ek_\Rc\\\ek_\Pc
    \end{pmatrix}\right)\right\rangle\!\right\rangle=0.
    \end{multline*}
The definition of the Dirac structure then leads to 
    \[
    \forall\,t\in I:\quad
    \left(\begin{pmatrix}-\ddt x(t)\\f_{\Rc}(t)\\f_{\Pc}(t)
    \end{pmatrix},\begin{pmatrix}\mathcal{H}'(x(t))\\e_{\Rc}(t)\\e_{\Pc}(t)
    \end{pmatrix}\right)\in\Dc,
    \]
and the proof is complete.
\end{enumerate}
\end{proof}

\begin{remark}[Further generalizations of the Hamiltonian]\label{rem:Lagr}
In general, the philosophy of port-Hamiltonian systems can be summarized as: whatever does not fit is adapted to fit. Concretely, this means that the class is further generalized whenever a physical system arises that cannot be captured by the existing definitions. This has led to numerous generalizations of the class introduced in \Cref{def-pH}, some of which we briefly discuss below.
\begin{enumerate}[label=(\alph*)]
\item In a more general formulation of port-Hamiltonian systems, implicit energy storage is allowed \cite{MvdS18,MvdS20}. This is, for instance, required for a direct treatment of mechanical systems with holonomic constraints. Mathematically, this means that instead of specifying a Hamiltonian, one prescribes a Lagrangian submanifold $\mathcal{L}\subset \Xc\times \Xc'$. Consequently, the co-energy variable $e(\cdot)$ need not coincide with the Fr'echet derivative of a Hamiltonian; rather, the pair $(x(\cdot),e(\cdot))$ takes values in the prescribed manifold $\mathcal{L}\subset\Xc\times\Xc'$. This generalization does not affect the definition of weak solutions: our concept extends in a straightforward manner, except for the energy balance discussed in \Cref{sec:enbal}, which explicitly relies on the presence of a Hamiltonian. To keep the exposition concise, we do not elaborate on Lagrangian submanifolds in this work.
\item In a generalization of \Cref{def-pH}, the resistance may depend on the state. 
That is, one has to consider {\em modulated resistive relations}, which are families 
$\Rc=(\mathcal \Rc_\xk)_{\xk\in U}$ with the property that 
$\mathcal \Rc_\xk\subset\mathcal \Fc_\Rc\times\mathcal \Fc_\Rc'$ is a resistive relation for all $\xk\in U$. 
Accordingly, in \eqref{eq:phinc}, the condition $(f_\Rc(t),e_\Rc(t))\in\mathcal R$ 
has to be replaced with $(f_\Rc(t),e_\Rc(t))\in\mathcal R_{x(t)}$. 
An example is a car driving on a road with varying surface conditions, where the rolling resistance depends on the position of the vehicle, which in turn is part of the state.

The definition of a classical solution for this type of generalization is immediate. 
For the definition of a weak solution, one simply replaces in \Cref{def:pH1}\,\ref{def:pH13} with 
\[
   (f_\Rc(t),e_\Rc(t))\in \Rc_{x(t)} \quad \text{for almost all $t\in I$}.
\]
Clearly, in this setting classical solutions are also weak solutions. 
To show the converse, namely that sufficiently smooth weak solutions are also classical ones, 
it suffices to assume that, instead of closedness of 
$\Rc\subset\Fc_\Rc\times\Fc_\Rc'$, the set
\[
   \setdef{(x,f_\Rc,e_\Rc)\in U\times\Fc_\Rc\times\Fc_\Rc'}{(f_\Rc,e_\Rc)\in\Rc_x}\label{eq:resclosed}
\]
is relatively closed in $U\times\Fc_\Rc\times\Fc_\Rc'$. 
With this assumption, the corresponding argument in the proof of \Cref{thm:weakclass}\,\ref{thm:weakclass2} carries over to this setting without further difficulties.
\item The system may evolve on a (Banach) manifold $\mathcal{M}$. More precisely, the state evolves on a manifold, as is common in rational mechanics \cite{Arn89}. In this case, the Dirac structure is a subset of the Cartesian product of the tangent and cotangent spaces of $\mathcal{M}$ at $x(t)$. In particular, the Dirac structure is modulated, that is, it depends on $x(t)$. The definition of weak solutions (that is, multiplying by test functions and formally integrating by parts) leads to highly advanced questions in differential geometry and, to the best of the author's knowledge, is completely unexplored, even in the finite-dimensional setting. The author intends to consider this problem in the future, although no outcome can be promised.
%\quad\qquad\text{(a~{\em resistive relation}).}\]
%  \[\forall\, \mathfrak{x}\in U:\;(\fk_\Rc,\ek_\Rc)\in\Rc_\xk:\quad \langle \fk_\Rc, \ek_\Rc\rangle\leq0\quad\qquad\text{(a~{\em modulated resistive relation}).}\]
\end{enumerate}
\end{remark}
\subsection{The finite-dimensional case}
One of the main motivations for introducing weak solutions, as already mentioned, is the search for a solution concept that is consistent with the weak solution framework for partial differential equations. We will discuss this in more detail by means of examples in \Cref{sec:Ex}.
However, the weak solution concept also offers several interesting features in the finite-dimensional setting, which we will explore in the following.

If all flow and effort spaces are finite-dimensio\-nal, we can set (by using suitable isometric isomophisms), choose
\[\Xc=\Xc'=\R^{n_\mathrm{s}},\quad \Fc_\Rc=\Fc_\Rc'=\R^{n_\mathrm{r}},\quad \Fc_\Pc=\Fc_\Pc'=\R^{n_\mathrm{p}}.\]
Then \Cref{rem:fin1} gives that any Dirac structure
\[\mathcal D\subset (\Xc\times\Fc_{\Rc}\times \Fc_\Pc)\times(\Xc'\times\Fc_{\Rc}'\times \Fc_\Pc')\cong \R^{2(n_\mathrm{s}+n_\mathrm{r}+n_\mathrm{p})}\]
can be represented as 
\[\Dc=\ker[F_\mathrm{s},\,F_\mathrm{r},\,F_\mathrm{p},\,G_\mathrm{s},\,G_\mathrm{r},\,G_\mathrm{p}],\]
where $F_\mathrm{s},G_\mathrm{s}\in\R^{n\times n_\mathrm{s}}$, $F_\mathrm{r},G_\mathrm{r}\in\R^{n\times n_\mathrm{r}}$, $F_\mathrm{p},G_\mathrm{p}\in\R^{n\times n_\mathrm{p}}$, $n:=n_\mathrm{s}+n_\mathrm{r}+n_\mathrm{p}$, such that, for \[F:=[F_\mathrm{s},\,F_\mathrm{r},\,F_\mathrm{p}], \qquad G:=[G_\mathrm{s},\,G_\mathrm{r},\,G_\mathrm{p}],\] the matrix $[F,\,G]$ has full row rank with $FG^\top+GF^\top=0$. The strong form of the port-Hamiltonian system \eqref{eq:phinc} is then given by 
\begin{multline}
0=-F_\mathrm{s}\dot{x}(t)+F_\mathrm{r}f_\Rc(t)+F_\mathrm{p}f_\Pc(t)
+F_\mathrm{s}\nabla \mathcal{H}({x}(t))
+F_\mathrm{r}f_\Rc(t)+F_\mathrm{p}f_\Pc(t),\\ (f_\Rc(t),e_\Rc(t))\in\mathcal R.\label{eq:pHfindim}
\end{multline}
Note that we need to take the gradient of the Hamiltonian rather than its derivative; the two are simply related by a transpose.\\
Next we characterize weak solutions. To this end, we use the identity \eqref{eq:Dirimker}, which implies that, if $\chi\in \conC_0^1(I;\R^{n_\mathrm{s}})$, $\varrho\in \conC_0^1(I;^{n_\mathrm{s}})$, $\chi_\Rc,\varrho_\Rc\in \conC_0^1(I;\R^{n_\mathrm{r}})$,
$\chi_\Pc,\varrho_\Pc\in \conC_0^1(I;\R^{n_\mathrm{p}})$, with \[\left(\begin{pmatrix}\chi(t)\\\chi_{\Rc}(t)\\\chi_{\Pc}(t)
    \end{pmatrix},\begin{pmatrix}\varrho(t)\\\varrho_{\Rc}(t)\\\varrho_{\Pc}(t)  \end{pmatrix}\right)\in\Dc\quad\forall\,t\in I,\]
then there exists some $\varphi\in \conC_0^1(I;\R^n)$, such that
\[\begin{pmatrix}\chi(t)\\\chi_{\Rc}(t)\\\chi_{\Pc}(t)
    \end{pmatrix}=\begin{pmatrix}G_\mathrm{s}^\top\\G_\mathrm{r}^\top\\G_\mathrm{p}^\top
    \end{pmatrix}\varphi(t),\quad \begin{pmatrix}\varrho(t)\\\varrho_{\Rc}(t)\\\varrho_{\Pc}(t)
    \end{pmatrix}=\begin{pmatrix}F_\mathrm{s}^\top\\F_\mathrm{r}^\top\\F_\mathrm{p}^\top
\end{pmatrix}\varphi(t)\quad\forall\,t\in I.\]
As~a~consequence, the weak solutions of \eqref{eq:pHfindim} are those which fulfill $(f_\Rc(t),e_\Rc(t))\in\mathcal R$ for almost all $t\in I$, together with
\begin{multline*}
    \forall\,\varphi(t)\in \conC_0^1(I;\R^n): \\
0=\int_I \left(\begin{pmatrix}G_\mathrm{s}^\top\varphi(t)\\ G_\mathrm{r}^\top\varphi(t)\\ G_\mathrm{p}^\top\varphi(t)    \end{pmatrix}\right)^\top\begin{pmatrix}\nabla\mathcal{H}(x(t))\\ e_\Rc(t)\\e_\Pc(t)
    \end{pmatrix}+\left(\begin{pmatrix}F_\mathrm{s}^\top\ddt \varphi(t)\\ F_\mathrm{r}^\top\varphi(t)\\ F_\mathrm{p}^\top\varphi(t)    \end{pmatrix}\right)^\top\begin{pmatrix}x(t)\\ f_\Rc(t)\\f_\Pc(t)
    \end{pmatrix} {\rm d}t.
\end{multline*}
An expansion of the latter integral yields, for all $\varphi(t)\in \conC_0^1(I;\R^n)$,
\begin{multline*}
0=\int_I
\big(\ddts\varphi(t)\big)^\top F_\mathrm{s}{x}(t) {\rm d}t\\+\int_I
\varphi(t)^\top \Big(F_\mathrm{r}f_\Rc(t)+F_\mathrm{p}f_\Pc(t)
+F_\mathrm{s}\nabla \mathcal{H}({x}(t))
+F_\mathrm{r}f_\Rc(t)+F_\mathrm{p}f_\Pc(t)\Big) {\rm d}t.
\end{multline*}
This means that only $F_\mathrm{s} x$ (and not the entire vector $x$) is weakly differentiable, and the differential-algebraic equation
\[0=-\ddts F_\mathrm{s}x(t)+F_\mathrm{r}f_\Rc(t)+F_\mathrm{p}f_\Pc(t)
+F_\mathrm{s}\nabla \mathcal{H}({x}(t))
+F_\mathrm{r}f_\Rc(t)+F_\mathrm{p}f_\Pc(t)\]
holds in the weak sense.

\section{The state extrapolation space}\label{sec:extr}

Our concept of a weak solution essentially involves testing in time and in the relevant function spaces, the latter corresponding to testing over spatial domains in the case of PDEs. In the following, we show that - loosely speaking - the state trajectory can be interpreted as a weakly differentiable function taking values in a larger space. More precisely, though still not in an absolute sense, we factor out the space of states whose derivatives are annihilated by the Dirac structure and consider a superspace in which the state derivative evolves.  
This extends the common concept of the \emph{state extrapolation space}, for example, in the context of evolution equations~\cite[Sec.~2.10]{TuWe09}.

In the following, to avoid overly bulky expressions, we will group the spaces of resistive and external flows and efforts together. In particular, we define
\begin{equation}
\Fc_{\Qc}:=\Fc_{\Rc}\times \Fc_{\Pc}. \label{eq:FRP}
\end{equation}
We do not explicitly use Hamiltonians and resistive relations here; instead, we allow for general trajectories of the co-energy variables, resistive flows, and efforts. All the following results and definitions apply to port-Hamiltonian systems in the sense of \Cref{def:pH1} by setting \eqref{eq:FRP} and additionally imposing $e(t) = \mathcal{H}'(x(t))$ and $(f_\Rc(t), e_\Rc(t)) \in \Rc$.

Next, we show that (at least part of) the state trajectory of a port-Hamiltonian system 
is weakly differentiable when viewed as a trajectory in a suitable \emph{state extrapolation space}. This requires considering the space of occurring co-energy variables, which are precisely those obtained by projecting the Dirac structure onto the component that represents the co-energy variables.
\begin{definition}\label{def:effoccur}
Let $\Xc$, $\Fc_{\Qc}$ be Banach spaces, and let $\Dc\subset(\Xc\times \Fc_{\Qc})\times (\Xc'\times \Fc_{\Qc}')$ be a~Dirac structure. Then the {\em space of occurring co-energy variables} is
\[\Ec:=\setdef{\ek\in \Xc'}{\exists\, \fk\in\Xc,\,\fk_{\Qc}\in\Fc_{\Qc},\,\ek_{\Qc}\in\Fc_{\Qc}'\text{ s.t.\ }\left(\begin{pmatrix}
   \fk\\\fk_{\Qc} 
\end{pmatrix},\begin{pmatrix}
   \ek\\\ek_{\Qc} 
\end{pmatrix}\right)\in\Dc}.\]
\end{definition}
Clearly, $\Ec$ becomes a~normed space when restricting the norm in $\Xc'$ to $\Ec$. However, this does not result in a~Banach space, in general. Instead, we equip $\Ec$ with the norm 
\begin{equation}
\|\ek\|_{\Ec}:=\inf\setdef{\left\|\begin{pmatrix}\fk\\\fk_{\Qc}\\\ek\\\ek_{\Qc}
\end{pmatrix}\right\|_{\Xc\times\Fc_{\Qc}\times \Xc'\times\Fc_{\Qc}'}}{\;\parbox{4.3cm}{$\fk\in\Xc,\,\fk_{\Qc}\in\Fc_{\Qc},\,\ek_{\Qc}\in\Fc_{\Qc}'$\\[2mm] s.t.\ $\left(\begin{pmatrix}
   \fk\\\ek_{\Qc} 
\end{pmatrix},\begin{pmatrix}
   \ek\\\ek_{\Qc} 
\end{pmatrix}\right)\in\Dc$}}.\label{eq:EDnorm}
\end{equation}
This indeed gives rise to a Banach space, as can be seen from the following: The isomorphism theorem applied to the canonical projection of the Dirac structure onto the co-energy variable yields that
\begin{equation}
\Ec\cong \Dc/\Dc_0,\text{  where }\Dc_0=\big(\Dc\cap ((\Xc\times \Fc_{\Qc})\times (\{0\}\times\Fc_{\Qc}'))\big).\label{eq:factorspace}
\end{equation}
Furthermore, it can be seen that the canonical isomorphism from $\Ec$ to $\Dc/\Dc_0$ also preserves the norm \eqref{eq:EDnorm}.
Since $\Dc$ is a complete space, $\Dc_0$ is also complete, as it is the intersection of complete spaces. Hence, $\Dc/\Dc_0$, equipped with the quotient norm \eqref{eq:factorspacenorm}, is complete. Consequently, the same holds for $\Ec$ with the norm \eqref{eq:EDnorm}. 
Now consider the annihilator
\[\leftidx{^\bot}{\Ec}=\setdef{\fk\in\Xc}{\langle \fk,\ek\rangle_{\Xc,\Xc'}=0\;\forall\,\ek\in \Ec}.\]
Denote the canonical mapping from $\Xc$ to $\Xc/\leftidx{^\bot}{\Ec}$ by $\iota$, i.e.,
\begin{equation}\begin{aligned}
\iota:\quad \Xc&\to\Xc/\leftidx{^\bot}{\Ec},\\
\fk&\mapsto \fk+\leftidx{^\bot}{\Ec}.
\end{aligned}\label{eq:iotadef}\end{equation}
We refer to the dual of $\Ec$ as the \emph{state extrapolation space}.
By using \cite[Thm.~4.9]{Rudi73}, any $\fk+\leftidx{^\bot}{\Ec}$ defines an element of $\Ec'$ via
\[\forall\,\fk\in\Xc: \quad\langle \ek, \fk+\leftidx{^\bot}{\Ec}\rangle_{\Ec,\Ec'}=\langle \fk,\ek\rangle_{\Xc,\Xc'}.\]
Consequently, we can regard $\Xc/\leftidx{^\bot}{\Ec}$ as a~subspace of  $\Ec'$. Therefore, $\iota$ maps from $\Xc$ to $\Ec'$. 
Next we show that for weak solutions of port-Hamiltonian, the state trajectory fulfills that $\iota x(\cdot)$ is weakly differentiable as a~mapping to $\Ec'$. 

%Hereby, we impose the additional assumption that the space
%Assume that the space 
 %\begin{equation}\Dc_0:=\Dc\cap\Big(\big(\Xc\times \Fc_{\Qc}\big)\times \big(\{0\}\times \Fc_{\Qc}'\big)\label{eq:D0}\end{equation}
%has a~closed algebraic complement in $\Dc$. That is, there exists some closed subspace $\Dc_c\subset\Dc$ with
 %\begin{equation}\Dc=\Dc_0\oplus\Dc_c\label{eq:D02}\end{equation}
%We note that this condition is, by taking the usual orthogonal complement, automatically fulfilled, if $\Xc$ and $\Fc_{\Qc}$ are Hilbert spaces.
\begin{theorem}\label{thm:extr}
    Let $\Xc$, $\Fc_{\Qc}$ be Banach spaces, and let $\Dc\subset(\Xc\times \Fc_{\Qc})\times (\Xc'\times \Fc_{\Qc}')$ be a~Dirac structure. Assume that the space
 $\Ec\subset\Fc'$ of occurring co-energy variables with norm \eqref{eq:EDnorm} (see \Cref{def:effoccur}) is reflexive. %Further, assume that the space $\Dc_0$ as in \eqref{eq:D0} has a~closed algebraic complement. That is, there exists some closed subspace $\Dc_c\subset\Dc$ with
% \eqref{eq:D02}.\\
 Let $I$ be an interval, let $p\in[1,\infty)$, and let $x\in\conC(I;\Xc)$, $e\in \Lp{p}_{\loc}(I;\Xc')$, $f_{\Qc}\in \Lp{p}_{\loc}(I;\Fc_{\Qc})$, $e_{\Qc}\in \Lp{p}_{\loc}(I;\Fc_{\Qc}')$, such that, for
the indefinite inner product as in \eqref{eq:Diracinprod} with $\Fc=\Xc\times \Fc_{\Qc}$, it holds that
\begin{multline}
    \forall \chi\in \conC_0^1(I;\Xc),\;\varrho\in \conC_0^1(I;\Xc'),\; \chi_{\Qc}\in \conC_0^1(I;\Fc_{\Qc}), \varrho_{\Qc}\in \conC_0^1(I;\Fc_{\Qc}'),
    \\ \text{ with }\left(\begin{pmatrix}\chi(t)\\\chi_{\Qc}(t)
    \end{pmatrix},\begin{pmatrix}\varrho(t)\\\varrho_{\Qc}(t)    \end{pmatrix}\right)\in\Dc\quad\forall\, t\in I:\\
    \int_I 
    \left\langle\!\left\langle \left(\begin{pmatrix}x(t)\\f_{\Qc}(t)
    \end{pmatrix},\begin{pmatrix}e(t)\\e_{\Qc}(t)
    \end{pmatrix}\right),\left(\begin{pmatrix}\chi(t)\\\chi_{\Qc}(t)
    \end{pmatrix},\begin{pmatrix}\ddt \varrho(t)\\\varrho_{\Qc}(t)
    \end{pmatrix}\right)\right\rangle\!\right\rangle
{\rm d}t=0.\label{eq:weaksolopen}
\end{multline}
Then the canonical projection $\iota:\Xc\to \Xc/\leftidx{^\bot}{\Ec}$ as in \eqref{eq:iotadef} fulfills
\[\iota x\in \Wkp{1,p}_{\loc}(I;\Ec').\]
\end{theorem}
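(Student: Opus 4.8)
The plan is to unpack the weak formulation \eqref{eq:weaksolopen} by choosing test functions whose $\Dc$-valued part has vanishing co-energy ``$\varrho$''-block, thereby isolating the term containing $x(t)$ paired against $\ddt\varrho(t)$. Concretely, recall from \eqref{eq:Diracinprod} that
\[
\left\langle\!\left\langle \left(\spvek{x(t)}{f_\Qc(t)},\spvek{e(t)}{e_\Qc(t)}\right),\left(\spvek{\chi(t)}{\chi_\Qc(t)},\spvek{\ddt\varrho(t)}{\varrho_\Qc(t)}\right)\right\rangle\!\right\rangle
= \langle x(t),\ddt\varrho(t)\rangle + \langle \chi(t),e(t)\rangle + \langle f_\Qc(t),\varrho_\Qc(t)\rangle + \langle \chi_\Qc(t),e_\Qc(t)\rangle.
\]
So I would first establish the following key observation: for \emph{any} fixed $\ek\in\Ec$ (with witnesses $\fk\in\Xc$, $\fk_\Qc\in\Fc_\Qc$, $\ek_\Qc\in\Fc_\Qc'$ so that the corresponding element lies in $\Dc$) and \emph{any} scalar $\psi\in\conC_0^1(I)$, the test-function tuple $\psi(t)\cdot(\sbvek{\fk}{\fk_\Qc},\sbvek{\ek}{\ek_\Qc})$ is admissible in \eqref{eq:weaksolopen} since it is $\Dc$-valued for all $t$ (as $\Dc$ is a subspace). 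Plugging this in and using the product rule gives
\[
0 = \int_I \psi'(t)\,\langle x(t),\ek\rangle\,\mathrm{d}t \;+\; \int_I \psi(t)\,\Big(\langle \fk,e(t)\rangle + \langle f_\Qc(t),\ek_\Qc\rangle + \langle \fk_\Qc,e_\Qc(t)\rangle\Big)\,\mathrm{d}t.
\]
Hence the scalar function $t\mapsto \langle x(t),\ek\rangle = \langle \ek, \iota x(t)\rangle_{\Ec,\Ec'}$ is weakly differentiable on $I$ with weak derivative $g_{\ek}(t) := \langle \fk,e(t)\rangle + \langle f_\Qc(t),\ek_\Qc\rangle + \langle \fk_\Qc,e_\Qc(t)\rangle$, which is in $\Lp{p}_{\loc}(I)$ by the integrability hypotheses on $e,f_\Qc,e_\Qc$ and the isomorphism $\Ec\cong\Dc/\Dc_0$ (so the witnesses may be chosen with norm controlled by $\|\ek\|_\Ec$, and in fact $g_{\ek}$ is independent of the choice of witnesses — this follows because two choices differ by an element of $\Dc_0$, against which the pairing vanishes by the very definition of $\Dc_0$ and the Dirac self-orthogonality).

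Next I would assemble these scalar weak derivatives into a single $\Ec'$-valued function. For a fixed compact subinterval $[a,b]\subset\mathring I$, the map $\ek\mapsto g_{\ek}$ is linear in $\ek$, and the estimate $\|g_{\ek}\|_{\Lp{p}(a,b)} \le C_{[a,b]}\,\|\ek\|_\Ec$ (with $C_{[a,b]}$ depending on the $\Lp{p}$-norms of $e,f_\Qc,e_\Qc$ over $[a,b]$) shows it is bounded from $\Ec$ into $\Lp{p}(a,b)$; equivalently, for each fixed $t$ (a.e.) we get a bounded linear functional on $\Ec$, i.e. an element $y(t)\in\Ec'$, and $t\mapsto y(t)$ lies in $\Lp{p}(a,b;\Ec')$. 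Here is where \textbf{reflexivity of $\Ec$} enters and is the main obstacle: to conclude $y\in\Lp{p}_{\loc}(I;\Ec')$ as a genuine Bochner-measurable function — rather than merely a weak-$*$ measurable family — I need $\Ec'$ to have the Radon–Nikod\'ym property (so that $\Lp{p}(a,b;\Ec')$ is identified with the dual of $\Lp{p'}(a,b;\Ec)$ and Pettis/Bochner measurability coincide), and this is precisely guaranteed when $\Ec$, hence $\Ec'$, is reflexive. With $\Ec$ separable-or-not aside, reflexivity also lets me invoke that bounded weak-measurable maps into a reflexive (hence RNP) dual are Bochner measurable after modification on a null set. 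I would spell this out via the duality $\Lp{p}(a,b;\Ec')=\big(\Lp{p'}(a,b;\Ec)\big)'$ for reflexive $\Ec$ and $1\le p<\infty$ (\cite{Diestel77}), applied to the bounded functional $\Psi\mapsto \int_a^b g_{\Psi(t)}(t)\,\mathrm{d}t$ on $\Lp{p'}(a,b;\Ec)$ built from the pointwise-defined $g$.

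Finally I would verify that this $y$ is the weak derivative of $\iota x$ in the vector-valued sense: for every $\varphi\in\conC_0^1(I;\Ec)$ — which by density I may first take of the form $\sum_j \psi_j(t)\,\ek_j$ with $\psi_j\in\conC_0^1(I)$, $\ek_j\in\Ec$, then pass to the limit using the continuity of $\iota x$ and the $\Lp{p}$-bound on $y$ — the identity $\int_I \langle \varphi'(t),\iota x(t)\rangle_{\Ec,\Ec'}\,\mathrm{d}t = -\int_I \langle \varphi(t),y(t)\rangle_{\Ec,\Ec'}\,\mathrm{d}t$ holds, which is exactly the scalar statement summed over $j$. (Note $\iota x$ is continuous, hence in $\Lp{p}_{\loc}(I;\Ec')$, since $\iota:\Xc\to\Xc/{}^\bot\Ec\hookrightarrow\Ec'$ is bounded.) This yields $\iota x\in\Wkp{1,p}_{\loc}(I;\Ec')$ with $\ddt(\iota x)=y$, completing the proof. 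The density step and the interchange of the scalar weak-derivative identity with the vector-valued one are routine; the genuine subtlety is the measurability/duality argument requiring reflexivity.
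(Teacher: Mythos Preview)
Your proof is essentially the same as the paper's: both choose test functions of the form $\psi(t)\cdot(\text{fixed element of }\Dc)$, extract the scalar weak derivative of $t\mapsto\langle\ek,\iota x(t)\rangle$, bound it uniformly in $\ek$ via the quotient norm on $\Ec\cong\Dc/\Dc_0$, and then invoke reflexivity (hence the Radon--Nikod\'ym property of $\Ec'$) to upgrade to the vector-valued statement.

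Two minor points where the implementations differ. First, your justification that $g_\ek$ is independent of the choice of witnesses (``the pairing with $\Dc_0$ vanishes by Dirac self-orthogonality'') is not quite right as stated: self-orthogonality gives $\langle\!\langle d_0,d\rangle\!\rangle=0$ only for $d\in\Dc$, whereas the trajectory is not known to lie in $\Dc$ pointwise. Independence follows instead either from uniqueness of scalar weak derivatives, or---as the paper does---by first proving via the fundamental lemma of calculus of variations that $(e(t),e_\Qc(t),f_\Qc(t))$ lies a.e.\ in the space $\Dc_1$ orthogonal to $\Dc_0$. Second, for the passage from scalar to vector-valued Sobolev regularity, the paper cites a theorem on Sobolev--Reshetnyak spaces (Caama\~no et al.) that directly characterises $\Wkp{1,p}(I;\Ec')$ under RNP via conditions (i) scalar weak differentiability for each $\ek$ and (ii) a uniform $\Lp{p}$ majorant; this covers all $p\in[1,\infty)$ at once. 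Your proposed route through the duality $\Lp{p}(a,b;\Ec')=(\Lp{p'}(a,b;\Ec))'$ is fine for $1<p<\infty$, but at $p=1$ the dual of $\Lp{\infty}(a,b;\Ec)$ is strictly larger than $\Lp{1}(a,b;\Ec')$, so the representation step fails as written. A simple fix, implicit in your own formula for $g_\ek(t)$, is to observe that $y(t)$ is the image of the Bochner-measurable function $(e(t),f_\Qc(t),e_\Qc(t))$ under a fixed bounded linear map into $\Ec'$, which yields Bochner measurability and the $\Lp{p}$ bound directly without any duality argument.
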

\begin{proof}
{\em Step~1:} We construct certain forms and operators that are useful for further argumentation. Consider the spaces $\Dc_0$ as in \eqref{eq:factorspace}. Further, by denoting the orthogonal complement with respect to the inner product
$\langle\!\langle\cdot,\cdot\rangle\!\rangle$ in \eqref{eq:Diracinprod} by the superscript $\bot\!\!\bot$, we introduce
\begin{equation}\label{eq:D01def}
    \begin{aligned}
%\Dc_0&:=\Dc\cap\Big(\big(\Xc\times\Fc\big)\times \big(\{0\}\times\Fc'\big)\Big),\\
        \Dc_1&:=\setdef{\begin{pmatrix}
            \ek\\\ek_{\Qc}\\\fk_{\Qc}
        \end{pmatrix}\in\Xc'\times\Fc_{\Qc}'\times\Fc_{\Qc}}{\left(\begin{pmatrix}
            0\\\fk_{\Qc}
        \end{pmatrix},\begin{pmatrix}
            \ek\\\ek_{\Qc}
        \end{pmatrix}\right)\in\Dc_0^{\bot\!\!\bot}}
    \end{aligned}
\end{equation}
% Further consider the bilinear form
% \begin{align*}
%     \langle\!\langle\!\langle\!\cdot,\cdot\rangle\!\rangle\!\rangle\!:\quad \big(\Xc\times\Fc_{\Qc}\times\Fc_{\Qc}'\big)\times\big(\Xc'\times\Fc_{\Qc}'\times\Fc_{\Qc}\big)&\to \R,\\
%     \left(\left(\begin{smallmatrix}f_{1}\\f_{\Sc1}\\e_{\Sc1}\end{smallmatrix}\right),\left(\begin{smallmatrix}e_{2}\\e_{\Sc2}\\f_{\Sc2}\end{smallmatrix}\right)\right)&\mapsto \langle f_{1},e_{1}\rangle_{\Xc,\Xc'}\\&\qquad
%     +\langle f_{\Sc1},e_{\Sc2}\rangle_{\Fc_{\Qc},\Fc_{\Qc}'}
%     +\langle f_{\Sc2},e_{\Sc1}\rangle_{\Fc_{\Qc},\Fc_{\Qc}'}.
% \end{align*}

Then a simple argumentation yields that 
\begin{equation}
\begin{aligned}
    \langle\!\langle\!\langle\!\cdot,\cdot\rangle\!\rangle\!\rangle\!:\qquad \qquad \qquad \qquad \Dc/\Dc_0\times \Dc_1&\to \R,\\
    \left(\left(\left(\left(\begin{smallmatrix}\fk_{1}\\\fk_{\Sc1}\end{smallmatrix}\right),\left(\begin{smallmatrix}\ek_{1}\\[1mm]\ek_{\Sc1}\end{smallmatrix}\right)\right)+\Dc_0\right),\left(\begin{smallmatrix}\ek_{2}\\\ek_{\Sc2}\\\fk_{\Sc2}\end{smallmatrix}\right)\right)&\mapsto \langle \fk_{1},\ek_{2}\rangle_{\Xc,\Xc'}\\&\qquad
    +\langle \fk_{\Sc1},\ek_{\Sc2}\rangle_{\Fc_{\Qc},\Fc_{\Qc}'}
    +\langle \fk_{\Sc2},\ek_{\Sc1}\rangle_{\Fc_{\Qc},\Fc_{\Qc}'}
\end{aligned}\label{eq:BilForm}
\end{equation}
is a~well-defined and bounded bilinear form. Further, denote the canonical isomorphism from $\Ec$ to $\Dc/\Dc_0$ (see \eqref{eq:factorspace}) by $\kappa$, that is
\begin{equation}
\begin{aligned}
\kappa: \qquad \Ec &\to \Dc / \Dc_0,\\
 \ek&\mapsto \left(\begin{pmatrix}\fk\\\fk_{\Qc}
    \end{pmatrix},\begin{pmatrix}\ek\\\ek_{\Qc}
    \end{pmatrix}\right)+\Dc_0\text{ s.t.\ }\left(\begin{pmatrix}\fk\\\fk_{\Qc}
    \end{pmatrix},\begin{pmatrix}\ek\\\ek_{\Qc}
    \end{pmatrix}\right)\in\Dc.
\end{aligned}\label{eq:Top}
\end{equation}
%It follows immediately that $T$ is linear. Further, the closed-graph theorem \cite[Thm.~7.9]{Alt16} yields that $T$ is bounded.\\
{\em Step~2:} We prove the desired result.
Let $\delta\in \conC_0^1(I)$ and $\ek\in\mathcal{E}$. Then, by definition of $\mathcal{E}$, there exist $\fk\in \Xc$, $\fk_{\Qc}\in\Fc_{\Qc}$, $\ek_{\Qc}\in\Fc_{\Qc}'$,
such that 
\[\left(\begin{pmatrix}\fk\\\fk_{\Qc}
    \end{pmatrix},\begin{pmatrix}\ek\\\ek_{\Qc}
    \end{pmatrix}\right)\in\Dc\]
Then, by choosing
\[
\left(\begin{pmatrix}\chi(t)\\\chi_{\Qc}(t)
    \end{pmatrix},\begin{pmatrix}\varrho(t)\\\varrho_{\Qc}(t)
    \end{pmatrix}\right)=\delta(t)\cdot \left(\begin{pmatrix}\fk\\\fk_{\Qc}
    \end{pmatrix},\begin{pmatrix}\ek\\\ek_{\Qc}
    \end{pmatrix}\right),\]
an expansion of the above integral yields
\begin{multline}
\int_I \langle \ek,\iota x(t), \rangle_{\Ec,\Ec'}\ddt \delta(t){\rm d}t=
   \int_I \langle x(t), \ek\rangle_{\Xc,\Xc'}\ddt \delta(t){\rm d}t\\
   = -\int_I \langle f_{\Qc}(t), \ek_{\Qc}\rangle_{\Fc_{\Qc},\Fc_{\Qc}'} \delta(t){\rm d}t
-\int_I \langle \fk, e(t)\rangle_{\Xc,\Xc'} \delta(t){\rm d}t
-\int_I \langle \fk_{\Qc}, e_{\Qc}(t)\rangle_{\Xc,\Xc'} \delta(t){\rm d}t.   \label{eq:weaksolextr}
\end{multline}
%This yields that  $\iota x(t)$ is weak* measurable in the sense of \cite[Thm.~II.3.1]{Diestel77}. Since $x:I\to \Xc$ is continuous, we have that $\iota x:I\to \Ec'$ is continuous as well. Therefore, $\iota x$ takes values in a~separable subspace of $\Ec'$. Hence, by \cite[Lem.~2.2]{CreutzEvseev2024}, $\iota x$ is measurable.\\
%The calculations in \eqref{eq:weaksolextr} further imply that
Consequently, $\langle \ek,\iota x(t), \rangle_{\Ec,\Ec'}\in \Wkp{1,p}_{\loc}(I)$ for all $\ek\in\Ec$.\\
The equality \eqref{eq:weaksolextr} further gives rise to
\begin{multline*}
\forall\, \left(\left(\begin{smallmatrix}
\fk\\\fk_{\Qc}    
\end{smallmatrix}\right),\left(\begin{smallmatrix}
0\\[1mm]\ek_{\Qc}    
\end{smallmatrix}\right)\right)\in\Dc_0:\\
0= \int_I \langle f_{\Qc}(t), \ek_{\Qc}\rangle_{\Fc_{\Qc},\Fc_{\Qc}'} \delta(t){\rm d}t
+\int_I \langle \fk,e(t)\rangle_{\Xc,\Xc'} \delta(t){\rm d}t
+\int_I \langle \fk,e_{\Qc}(t)\rangle_{\Xc,\Xc'} \delta(t){\rm d}t.
\end{multline*}
The fundamental lemma of calculus of variations \cite[Thm.\ 6.3-2]{Ciarlet2013} then implies that, for $\Dc_1$ as in \eqref{eq:D01def},
\[\text{ for almost all }t\in I:\quad\begin{pmatrix}
    e(t)\\e_{\Qc}(t)\\f_{\Qc}(t)
\end{pmatrix}\in\Dc_1,\]
it follows from \eqref{eq:weaksolextr} that, for the bilinear form $\langle\!\langle\!\langle\!\cdot,\cdot\rangle\!\rangle\!\rangle$ as in \eqref{eq:BilForm} and $\kappa$ as in \eqref{eq:Top},
\[\int_I \langle \ek,\iota x(t) \rangle_{\Ec,\Ec'}\ddt \delta(t){\rm d}t=\int_I\left\langle\!\left\langle\!\left\langle \begin{pmatrix}
    e(t)\\e_{\Qc}(t)\\f_{\Qc}(t)
\end{pmatrix},\kappa \ek\right\rangle\!\right\rangle\!\right\rangle\varphi(t){\rm d}t.\]
That is, the weak derivative of $\iota x$ is given by the coefficient of $\varphi$ on the right-hand side of the equation above.
Then boundedness of the the bilinear form $\langle\!\langle\!\langle\!\cdot,\cdot\rangle\!\rangle\!\rangle$ yields, together with the fact that $\kappa$ is norm-preserving, that
\begin{multline*}
\exists\,m>0 \text{ s.t.\ }\forall\,\ek\in\Ec:\\ \ddt \langle \ek,\iota x \rangle_{\Ec,\Ec'}\leq m\Big(\|e(t)\|_{\Ec'}+\|e_{\Qc}(t)\|_{\Ec'}+\|f_{\Qc}(t)\|_{\Ec}\Big)\,\|\ek\|_{\Ec'}\text{ for almost all }t\in I,
\end{multline*}
where $\ddt$ now stands for the weak derivative.
Now invoking reflexivity, we obtain from the above findings that
\begin{enumerate}[label=(\roman*)]
    \item\label{item:proofi} $\langle \iota x(t),\ek \rangle_{\Ec',\Ec''}\in \Wkp{1,p}_{\loc}(I)$ for all $\ek\in\Ec''$, and
    \item\label{item:proofii} there exists some $f\in \Lp{p}_{\loc}(I)$, such that, for all $\ek\in\Ec''$ and almost all $t\in I$,
    \[\langle \iota x(t),\ek \rangle_{\Ec',\Ec''}\leq f(t)\cdot\|\ek\|_{\Ec''}.\]
\end{enumerate}
By \cite[Cor.~1.2.7]{Arendt2011}, reflexivity of $\Ec$ further yields that $\Ec'$ has the `Radon-Nikodym property' (see \cite[Def.~1.2.5]{Arendt2011}. Then we can infer from \cite[Thm.~4.6]{Caamano2020} that $\iota x\in \Wkp{1,p}_{\loc}(I;\Ec')$.
 \end{proof}
\begin{remark}\
    \begin{enumerate}[label=(\alph*)]
        \item If both $\Xc$ and $\Fc_{\Qc}$ are reflexive, then $\Ec$ is reflexive. This follows from the fact that $\Ec\cong \Dc/\Dc_0$ is isomorphic to a~subspace of $\Dc'$ \cite[Sec.~4.8]{Rudi73} (which is in turn isomorphic to $\Dc$ by swapping flows and efforts), and closed subspaces of reflexive spaces are reflexive \cite[Sec.~8.8]{Alt16}.
        \item Reflexivity of $\Ec$ is guaranteed, for instance, if $\Ec$ is a Hilbert space. This is the case, for example, if both $\Xc$ and $\Ec_{\Qc}$ are Hilbert spaces.
        \item The properties \ref{item:proofi} and \ref{item:proofii} in the proof of \Cref{thm:extr} characterize a generalization of Sobolev spaces of vector-valued functions, known as Sobolev–Reshetnyak spaces; see \cite{Caamano2020,CreutzEvseev2024}. For an original reference, see \cite{Reshetnyak1997}.
    \end{enumerate}
\end{remark}
The case where $\overline{\Ec}$ has a~closed algebraic complement deserves special attention.
\begin{definition}[Splitting subspace]\label{def:split}
    A~closed subspace $\Uc$ of a~Banach space $\Vc$ is called {\em splitting subspace}, if there exists some closed subspace $\Wc\subset\Vc$ with $\Uc\oplus\Wc=\Vc$. In this case, $\Wc$ is called {\em complementary to $\Uc$}.
\end{definition}
For a splitting subspace $\Uc$ of a Banach space $\Vc$, any complementary subspace of $\Uc$ is canonically isomorphic to the quotient space $\Vc/\Uc$. In particular, all complementary subspaces are mutually isomorphic. The additional advantage of the space of co-energy variables having a closure that is a splitting subspace is discussed in the following remark.
\begin{remark}[The case where $\overline{\Ec}$ is splitting]\label{rem:splitting}
Assume that $\overline{\Ec}$ is a~splitting subspace of $\Xc'$, and let $\Ec_c$ be a~complementary subspace of $\Ec$. Then $\leftidx{^\bot}{\Ec}$ is also a~splitting subspace of $\Xc$ via
\begin{equation}
    \Xc=\leftidx{^\bot}{\Ec}\oplus\leftidx{^\bot}{\Ec_c},\label{eq:Xsplit}
\end{equation}
and $\leftidx{^\bot}{\Ec_c}$ can be regarded as a~subspace of $\Ec'$.
Consequently, $\Xc$ can be regarded as a~dense subspace of 
\begin{subequations}
\label{eq:extrH}
\begin{equation}\Xc_{-1}:=\big(\leftidx{^\bot}{\Ec}\big)\oplus\Ec',\end{equation}
where the latter is a~Banach space provided with the norm
\begin{equation}
\|x_1+x_2\|_{\Xc_{-1}}=\sqrt{\|x_1\|_{\Xc}^2+\|x_2\|_{\Ec'}^2}\quad\forall\,x_1\in \leftidx{^\bot}{\Ec},\;x_2\in \Ec'.
\end{equation}
\end{subequations}
Note that, if $\Ec$ is a~closed subspace of $\Xc'$ (which for instance arises if $\Xc$ is finite-dimensional), then $\Xc=\Xc_{-1}$. Further, since any two complemetary spaces are isomorphic,
the space $\Xc_{-1}$ is well-defined up to isomorphy. If $\Ec$ is reflexive, then we have, under the preliminaries and notation from \Cref{thm:extr} that $x$ in \eqref{eq:weaksolopen} can be split uniquely into 
\begin{equation}\label{eq:xsplit}
    x=x_c+x_e\text{ with }
    x_c\in\conC(I;\leftidx{^\bot}{\Ec})\text{ and } x_e\in\conC(I;\leftidx{^\bot}{\Ec_c})\cap \Wkp{1,p}(I;\Ec').
\end{equation}
\end{remark}
In the case where $\Xc$ is a~Hilbert space, we clearly have that $\overline{\Ec}$ is splitting, with complementary subspace given by $\Ec^\bot=\overline{\Ec}^\bot$. In the subsequent remark, we consider a~further special case which allows to relate our concepts to the extrapolation space that is commonly used in infinite-dimensional linear systems theory \cite{St05,TuWe09}.

\begin{remark}[Extrapolation spaces in a special case]\label{rem:hilbert}
Let us consider a special case where $\Fc_{\Qc} = \{0\}$, $\Xc$ is a Hilbert space identified with its own dual, and $\Ec$ is dense in $\Xc$. Then the canonical embedding $\iota$ in \eqref{eq:iotadef} is actually the identity map, and the definition of the Dirac structure implies the existence of a (possibly unbounded) operator $J:\dom(J) \subset \Xc \to \Xc$ that is skew-adjoint (i.e., its adjoint satisfies $J^* = -J$), such that  
\[
\Dc = \setdef{(Je, e)}{e \in \dom(J)}.
\]
This yields $\Ec = \dom(J)$, so that the state extrapolation space satisfies $\Ec' = \dom(J)'$, where the latter denotes the dual of $\dom(J)$ with pivot space $\Xc$. 

Note that, by \cite[Prop.~2.10.2]{TuWe09}, for all $\lambda$ in the resolvent set of $J$, the state extrapolation space is topologically isomorphic to the completion of $\Xc$ with respect to the norm 
\[\|(\lambda I - J)^{-1} x\|_\Xc.\]
%from \cite{BKvdSZ10}that  
\end{remark}

\section{The energy balance}\label{sec:enbal}

The Hamiltonian often (though not always) admits a physical interpretation as an energy, whereas the dual pairings of efforts and flows represents power. 
For a port-Hamiltonian system as defined in \Cref{def-pH}, we can apply the chain rule and the fundamental theorem of calculus to show that any classical solution on an interval $I \subset \R$ satisfies
\begin{multline*}
\forall\,t_0,t_1\in I\text{ s.t.\ }t_0\leq t_1:\\    \mathcal{H}(x(t_1))-\mathcal{H}(x(t_0))=\int_{t_0}^{t_1}\ddts\mathcal{H}(x(t)){\rm d}t=\int_{t_0}^{t_1}\langle \ddts x(t),\mathcal{H}'(x(t))\rangle_{\Xc,\Xc'}{\rm d}t\\
=-\frac12\int_{t_0}^{t_1}\underbrace{\left\langle\!\left\langle \left(\begin{pmatrix}-\ddt x(t)\\f_\Rc(t)\\f_\Pc(t)\end{pmatrix},\begin{pmatrix}\mathcal{H}'(x(t))\\e_\Rc(t)\\e_\Rc(t)\end{pmatrix}\right),\left(\begin{pmatrix}-\ddt x(t)\\f_\Rc(t)\\f_\Pc(t)\end{pmatrix},\begin{pmatrix}\mathcal{H}'(x(t))\\e_\Rc(t)\\e_\Rc(t)\end{pmatrix}\right)\right\rangle\!\right\rangle}_{=0}{\rm d}t\\
+\int_{t_0}^{t_1}\underbrace{\langle f_\Rc(t),e_\Rc(t)\rangle_{\Fc_\Rc,\Fc_\Rc'}}_{\leq 0}{\rm d}t+\int_{t_0}^{t_1}\langle f_\Pc(t),e_\Pc(t)\rangle_{\Fc_\Pc,\Fc_\Pc'}{\rm d}t\\\leq -\int_{t_0}^{t_1}\langle f_\Pc(t),e_\Pc(t)\rangle_{\Fc_\Pc,\Fc_\Pc'}{\rm d}t. 
\end{multline*}
Here, $\langle f_\Rc(t), e_\Rc(t) \rangle_{\Fc_\Rc, \Fc_\Rc'}$ represents the dissipated power, while  
$\langle f_\Pc(t), e_\Pc(t) \rangle_{\Fc_\Pc, \Fc_\Pc'}$ denotes the power supplied to the system at time $t \in I$. 

Our aim here is to show that the energy balance also holds for weak solutions.  
To this end, we impose the natural assumption that all efforts and flows lie in $\Lp{2}$.  
We consider two cases:
First, we establish the energy balance in the case where the Hamiltonian is quadratic.
Next, we consider the setting where  $\overline{\Ec}$ is a~splitting subspace, but the Hamiltonian is not necessarily quadratic. However, we assume that it extends to a continuously differentiable mapping from $\Xc_{-1}$, as described in \Cref{rem:splitting}. This assumption, for instance, is always satisfied in the finite-dimensional case.

The following lemmas are essential for both cases. This involves the convolution, which is defined as
\[\delta\ast z:\quad t\mapsto \int_I \delta(t-\tau)x(\tau){\rm d}\tau\]
for suitable functions $\delta:\R\to\R$, $x:I\to \Xc$.

\begin{lemma}\label{lem:conv}
    Let $\Xc$, $\Fc_{\Qc}$ be Banach spaces, and let $\Dc\subset(\Xc\times \Fc_{\Qc})\times (\Xc'\times \Fc_{\Qc}')$ be a~Dirac structure. 
Let $I$ be an interval, and let $x\in \conC(I;\Xc)$, $e\in \Lp{2}_{\loc}(I;\Xc')$, $f_{\Qc}\in \Lp{2}_{\loc}(I;\Fc_{\Qc})$, $e_{\Qc}\in \Lp{2}_{\loc}(I;\Fc_{\Qc}')$, such that \eqref{eq:weaksolopen} holds for
the indefinite inner product as in \eqref{eq:Diracinprod} with $\Fc=\Xc\times \Fc_{\Qc}$. Let $\varepsilon>0$, and let $\delta\in\conC_0^\infty(I)$ be a~nonnegative function with $\supp\delta\subset[-\varepsilon,\varepsilon]$ and $\delta(t)=\delta(-t)$ for all $t\in\R$. Further, let 
\begin{equation}
I_\varepsilon=\setdef{t\in I}{[t-\varepsilon,t+\varepsilon]\subset I}.\label{eq:Ieps}
\end{equation}
Then $\delta\ast x:I_\varepsilon\to\Xc$, $\delta\ast e:I_\varepsilon\to\Fc'$, $\delta\ast f_{\Qc}:I_\varepsilon\to\Fc$ and $\delta\ast e_{\Qc}:I_\varepsilon\to\Fc'$ are continuously differentiable with
\[\forall\,t\in I_{\varepsilon}:\quad\left(\begin{pmatrix}\ddt(\delta\ast{x})(t)\\(\delta\ast f_{\Qc})(t)
    \end{pmatrix},\begin{pmatrix}(\delta\ast e)(t)\\(\delta\ast e_{\Qc})(t)
    \end{pmatrix}\right)\in\Dc.\] 
\end{lemma}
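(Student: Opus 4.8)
The plan is to use the mollifier $\delta$ to convert the weak identity \eqref{eq:weaksolopen} into a pointwise statement about the smoothed trajectories. First I would record the standard facts about Bochner-space convolutions: for $x\in\conC(I;\Xc)$, the function $\delta\ast x$ is well-defined on $I_\varepsilon$, takes values in $\Xc$, is $\conC^\infty$ there with $\ddt(\delta\ast x)=(\ddt\delta)\ast x=\delta\ast(\text{difference quotient limit})$ in the appropriate sense, and similarly $\delta\ast e\in\conC^\infty(I_\varepsilon;\Xc')$, $\delta\ast f_{\Qc}\in\conC^\infty(I_\varepsilon;\Fc_{\Qc})$, $\delta\ast e_{\Qc}\in\conC^\infty(I_\varepsilon;\Fc_{\Qc}')$ — here using only that $e,f_{\Qc},e_{\Qc}\in\Lp{1}_{\loc}$, which is implied by $\Lp{2}_{\loc}$. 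Smoothness and differentiability of convolutions with a $\conC_0^\infty$ kernel are classical (e.g.\ via dominated convergence and the uniform continuity of $\delta$ on compacta); I would not grind through this.

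The heart of the argument is showing the membership in $\Dc$ pointwise. Fix $t_0\in I_\varepsilon$ and fix an arbitrary test element $\bigl(\spvek{\fk}{\fk_{\Qc}},\spvek{\ek}{\ek_{\Qc}}\bigr)\in\Dc$. By the definition of a Dirac structure (\Cref{def-Dir}) applied to $\Fc=\Xc\times\Fc_{\Qc}$, it suffices to prove that
\[
\left\langle\!\left\langle\left(\begin{pmatrix}\ddt(\delta\ast x)(t_0)\\(\delta\ast f_{\Qc})(t_0)\end{pmatrix},\begin{pmatrix}(\delta\ast e)(t_0)\\(\delta\ast e_{\Qc})(t_0)\end{pmatrix}\right),\left(\begin{pmatrix}\fk\\\fk_{\Qc}\end{pmatrix},\begin{pmatrix}\ek\\\ek_{\Qc}\end{pmatrix}\right)\right\rangle\!\right\rangle=0
\]
for every such element, since then the smoothed tuple is orthogonal to all of $\Dc$ with respect to $\langle\!\langle\cdot,\cdot\rangle\!\rangle$ and hence lies in $\Dc^{\bot\!\!\bot}=\Dc$. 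Note the sign: $\ddt(\delta\ast x)$ plays the role of the flow component, matching the entry $-\dot x$ in \eqref{eq:phinc} up to the reflection $\delta(-t)=\delta(t)$, which will produce the needed minus sign when the time derivative is transferred.

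To verify that orthogonality, I would plug into the weak identity \eqref{eq:weaksolopen} the constant-in-$\Dc$ test trajectory obtained by translating and reflecting the mollifier: for a scalar cutoff, take
\[
\left(\begin{pmatrix}\chi(t)\\\chi_{\Qc}(t)\end{pmatrix},\begin{pmatrix}\varrho(t)\\\varrho_{\Qc}(t)\end{pmatrix}\right)=\delta(t_0-t)\cdot\left(\begin{pmatrix}\fk\\\fk_{\Qc}\end{pmatrix},\begin{pmatrix}\ek\\\ek_{\Qc}\end{pmatrix}\right),
\]
which lies in $\conC_0^1(I;\cdot)$ with support in $[t_0-\varepsilon,t_0+\varepsilon]\subset I$ and takes values in $\Dc$ for every $t$ because $\Dc$ is a subspace. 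Expanding $\langle\!\langle\cdot,\cdot\rangle\!\rangle$ in \eqref{eq:weaksolopen} termwise and moving the convolution onto the data — using $\int_I \langle x(t),\ek\rangle\,\ddt[\delta(t_0-t)]\,\mathrm dt=-\int_I\langle x(t),\ek\rangle\,(\delta')(t_0-t)\,\mathrm dt=\langle\ddt(\delta\ast x)(t_0),\ek\rangle$ by Fubini/Bochner and the chain rule, and $\int_I\langle f_{\Qc}(t),\ek_{\Qc}\rangle\delta(t_0-t)\,\mathrm dt=\langle(\delta\ast f_{\Qc})(t_0),\ek_{\Qc}\rangle$, etc. — collapses \eqref{eq:weaksolopen} exactly into the displayed orthogonality relation. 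The main obstacle is bookkeeping: correctly tracking the sign coming from $\ddt[\delta(t_0-\cdot)]$ together with the $\langle\!\langle\cdot,\cdot\rangle\!\rangle$-pairing conventions, and justifying the interchange of the Bochner integral with the duality pairing and with differentiation in $t_0$ (all standard, via dominated convergence and boundedness of the pairing). Since $t_0\in I_\varepsilon$ and the test element in $\Dc$ were arbitrary, this gives the claimed pointwise inclusion for all $t\in I_\varepsilon$, completing the proof.
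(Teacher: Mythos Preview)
Your approach is correct and essentially matches the paper's: both shift the mollifier from the data onto the $\Dc$-valued test functions, with the paper doing this in two stages (first showing that the mollified trajectories again satisfy \eqref{eq:weaksolopen} on $I_\varepsilon$ by moving $\delta\ast$ across the pairing, then invoking the argument of \Cref{thm:weakclass}\,\ref{thm:weakclass2}), while you collapse these into the single choice of test function $\delta(t_0-\cdot)\cdot\bigl(\spvek{\fk}{\fk_{\Qc}},\spvek{\ek}{\ek_{\Qc}}\bigr)$. One bookkeeping caveat: your displayed identity $-\int_I\langle x(t),\ek\rangle\,\delta'(t_0-t)\,\mathrm dt=\langle\ddt(\delta\ast x)(t_0),\ek\rangle$ drops a sign, since $\int_I\delta'(t_0-t)x(t)\,\mathrm dt=\ddt(\delta\ast x)(t_0)$, so the right-hand side should read $-\langle\ddt(\delta\ast x)(t_0),\ek\rangle$, which is precisely the $-\dot x$ flow entry of \eqref{eq:phinc} that you anticipate.
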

\begin{proof}
Since $\varphi$ is an even function supported in $[-\varepsilon,\varepsilon]$, we have, by using \eqref{eq:weaksolopen}, 

\begin{multline*}
    \forall \chi\in \conC_0^1(I_\varepsilon;\Xc),\;\varrho\in \conC_0^1(I_\varepsilon;\Xc'),\; \chi_{\Qc}\in \conC_0^1(I_\varepsilon;\Fc_{\Qc}), \varrho_{\Qc}\in \conC_0^1(I_\varepsilon;\Fc_{\Qc}')
    \\ \text{ with }\left(\begin{pmatrix}\chi(t)\\\chi_{\Qc}(t)
    \end{pmatrix},\begin{pmatrix}\varrho(t)\\\varrho_{\Qc}(t)    \end{pmatrix}\right)\in\Dc\quad\forall\, t\in I_\varepsilon:\\
    \int_I 
    \left\langle\!\left\langle \left(\begin{pmatrix}\delta\ast x(t)\\\delta\ast f_{\Qc}(t)
    \end{pmatrix},\begin{pmatrix}\delta\ast e(t)\\\delta\ast e_{\Qc}(t)
    \end{pmatrix}\right),\left(\begin{pmatrix}\chi(t)\\\chi_{\Qc}(t)
    \end{pmatrix},\begin{pmatrix}\ddt \varrho(t)\\\varrho_{\Qc}(t)
    \end{pmatrix}\right)\right\rangle\!\right\rangle
{\rm d}t\\
=    \int_I 
    \left\langle\!\left\langle \left(\begin{pmatrix}x(t)\\f_{\Qc}(t)
    \end{pmatrix},\begin{pmatrix}e(t)\\e_{\Qc}(t)
    \end{pmatrix}\right),\left(\begin{pmatrix}\delta\ast \chi(t)\\\delta\ast \chi_{\Qc}(t)
    \end{pmatrix},\begin{pmatrix}\ddt (\delta\ast \varrho)(t)\\\delta\ast \varrho_{\Qc}(t)
    \end{pmatrix}\right)\right\rangle\!\right\rangle
{\rm d}t=0.
\end{multline*}
Now, by using that the convolution with a~smooth function is smooth, the result follows by the same argumentation as in the proof of \Cref{thm:weakclass}\,\ref{thm:weakclass2}.
\end{proof}

\begin{lemma}\label{lem:conv2}
Assume the conditions of \Cref{def-pH} hold, and let
$(x, f_\Rc, f_\Pc, e_\Rc, e_\Pc)$ be a weak solution of \eqref{eq:phinc} on $I$, with
$f_\Rc \in \Lp{2}_{\loc}(I; \Fc_\Rc)$,
$e_\Rc \in \Lp{2}_{\loc}(I; \Fc_\Rc')$,
$f_\Pc \in \Lp{2}_{\loc}(I; \Fc_\Pc)$, and
$e_\Pc \in \Lp{2}_{\loc}(I; \Fc_\Pc')$.

Let $\varepsilon > 0$, and let $\delta \in \conC_0^\infty(I)$ be a nonnegative function with
$\supp \delta \subset [-\varepsilon, \varepsilon]$ and $\delta(t) = \delta(-t)$ for all $t \in \R$.
Then, for $I_\varepsilon \subset I$ as defined in \eqref{eq:Ieps}, it holds that\begin{multline*}
    \forall\,t_0,t_1\in I_\varepsilon\text{ with }t_0\leq t_1:\\
    \mathcal{H}\big((\delta\ast x)(t_1)\big)-\mathcal{H}\big((\delta\ast x)(t)\big)=\int_{t_0}^{t_1} \langle (\delta\ast f_\Rc)(t), (\delta\ast e_\Rc)(t) \rangle_{\Fc_\Rc, \Fc_\Rc'}\,{\rm d}t\\
+ \int_{t_0}^{t_1} \langle (\delta\ast f_\Pc)(t), (\delta\ast e_\Pc)(t) \rangle_{\Fc_\Pc, \Fc_\Pc'}\,{\rm d}t\\+
\int_{t_0}^{t_1} \left\langle \ddts(\delta\ast x_\Rc)(t), \mathcal{H}'((\delta\ast x)(t))-(\delta\ast \mathcal{H}'(x))(t) \right\rangle_{\Xc, \Xc'}\,{\rm d}t.
\end{multline*}
\end{lemma}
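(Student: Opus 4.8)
The plan is to reduce the asserted identity to the self-orthogonality of the Dirac structure, after smoothing the trajectory so that the chain rule becomes available. First I would record that the hypotheses of \Cref{lem:conv} are in force: setting $\Fc_{\Qc}:=\Fc_{\Rc}\times\Fc_{\Pc}$ as in \eqref{eq:FRP}, $f_{\Qc}:=(f_{\Rc},f_{\Pc})$, $e_{\Qc}:=(e_{\Rc},e_{\Pc})$, and $e:=\mathcal{H}'\circ x$, the function $e$ is continuous (a composition of the continuous $x$ with the continuous $\mathcal{H}'$), hence lies in $\Lp{2}_{\loc}(I;\Xc')$, and condition~(d) of \Cref{def:pH1} is exactly the relation \eqref{eq:weaksolopen}. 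Thus \Cref{lem:conv} applies, so $\delta\ast x$, $\delta\ast(\mathcal{H}'(x))$, $\delta\ast f_{\Rc}$, $\delta\ast e_{\Rc}$, $\delta\ast f_{\Pc}$, $\delta\ast e_{\Pc}$ are continuously differentiable on $I_\varepsilon$ and satisfy, for every $t\in I_\varepsilon$,
\[
\left(\begin{pmatrix}-\ddt(\delta\ast x)(t)\\(\delta\ast f_{\Rc})(t)\\(\delta\ast f_{\Pc})(t)\end{pmatrix},\begin{pmatrix}(\delta\ast(\mathcal{H}'(x)))(t)\\(\delta\ast e_{\Rc})(t)\\(\delta\ast e_{\Pc})(t)\end{pmatrix}\right)\in\Dc,
\]
with the sign of the first component chosen as in \eqref{eq:phinc}.

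Next I would extract the pointwise power identity. Choosing $(\hat{\fk},\hat{\ek})=(\fk,\ek)$ in \Cref{def-Dir} shows that $\langle\fk,\ek\rangle=0$ for every element $(\fk,\ek)$ of a Dirac structure; applying this to the inclusion above, with the duality pairing on $(\Xc\times\Fc_{\Rc}\times\Fc_{\Pc})\times(\Xc'\times\Fc_{\Rc}'\times\Fc_{\Pc}')$ which splits into the sum of the three component pairings, yields for every $t\in I_\varepsilon$
\[
\left\langle\ddts(\delta\ast x)(t),(\delta\ast(\mathcal{H}'(x)))(t)\right\rangle_{\Xc,\Xc'}=\langle(\delta\ast f_{\Rc})(t),(\delta\ast e_{\Rc})(t)\rangle_{\Fc_{\Rc},\Fc_{\Rc}'}+\langle(\delta\ast f_{\Pc})(t),(\delta\ast e_{\Pc})(t)\rangle_{\Fc_{\Pc},\Fc_{\Pc}'}.
\]

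Finally I would conclude via the chain rule and the fundamental theorem of calculus. Since $\delta\ast x\in\conC^1(I_\varepsilon;\Xc)$ takes values in the domain of $\mathcal{H}$ and $\mathcal{H}$ is continuously Fr\'echet differentiable, $t\mapsto\mathcal{H}((\delta\ast x)(t))$ is $\conC^1$ on $I_\varepsilon$ with derivative $\langle\ddts(\delta\ast x)(t),\mathcal{H}'((\delta\ast x)(t))\rangle_{\Xc,\Xc'}$. Writing $\mathcal{H}'((\delta\ast x)(t))=(\delta\ast(\mathcal{H}'(x)))(t)+\bigl(\mathcal{H}'((\delta\ast x)(t))-(\delta\ast(\mathcal{H}'(x)))(t)\bigr)$, substituting the power identity into the contribution of $\delta\ast(\mathcal{H}'(x))$, and integrating from $t_0$ to $t_1$ produces exactly the asserted formula, with the defect $\mathcal{H}'((\delta\ast x)(t))-(\delta\ast(\mathcal{H}'(x)))(t)$ accounting for its last term.

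I do not anticipate a deep obstacle; the content lies in the first two steps, and the single technical point requiring care is that $\delta\ast x$ genuinely takes values in the set on which $\mathcal{H}'$ is defined — immediate when $\mathcal{H}$ is globally defined (for instance quadratic), and in general the place where $x(I)\subset U$ is used. One should also note that every integrand occurring (the $\Xc$-valued curve $t\mapsto\ddts(\delta\ast x)(t)$, the $\Xc'$-valued curves $t\mapsto\mathcal{H}'((\delta\ast x)(t))$ and $t\mapsto(\delta\ast(\mathcal{H}'(x)))(t)$, and the convolved resistive and external ports) is continuous on $I_\varepsilon$, so that each displayed manipulation is legitimate.
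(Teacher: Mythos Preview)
Your proposal is correct and follows essentially the same route as the paper: apply \Cref{lem:conv} with $\Fc_{\Qc}=\Fc_{\Rc}\times\Fc_{\Pc}$ to obtain that the mollified trajectory lies in $\Dc$ pointwise, use the self-orthogonality $\langle\fk,\ek\rangle=0$ of Dirac structure elements to extract the power identity, and finish with the chain rule and the fundamental theorem of calculus. Your remark about $\delta\ast x$ taking values in $U$ is a valid caveat that the paper leaves implicit.
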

\begin{proof}
    By applying \Cref{lem:conv} with $\Fc_{\Qc}$ as in \eqref{eq:FRP}, we obtain that
\[\forall\,t\in I_{\varepsilon}:\quad\left(\begin{pmatrix}\ddt(\delta\ast{x})(t)\\(\delta\ast f_{\Rc})(t)\\(\delta\ast f_{\Pc})(t)
    \end{pmatrix},\begin{pmatrix}(\delta\ast \mathcal{H}'(x))(t)\\(\delta\ast e_{\Rc})(t)\\(\delta\ast e_{\Pc})(t)
    \end{pmatrix}\right)\in\Dc.\] 
In particular, the above vectors are all self-orthogonal with respect to $\langle\langle\cdot,\cdot\rangle\rangle$, whence
\begin{multline*}
\forall\,t_0,t_1\in I\text{ s.t.\ }t_0\leq t_1:\\  \int_{t_0}^{t_1}\langle \ddts (\delta \ast x)(t), \mathcal{H}'((\delta \ast x)(t))\rangle_{\Xc,\Xc'}{\rm d}t\\
=-\frac12\int_{t_0}^{t_1}\underbrace{{\scriptscriptstyle\left\langle\!\left\langle \left(\begin{pmatrix}-\ddt (\delta \ast x)(t)\\(\delta \ast f_\Rc)(t)\\(\delta \ast f_\Pc)(t)\end{pmatrix},\begin{pmatrix}(\delta \ast \mathcal{H}'(x))(t)\\(\delta \ast e_\Rc)(t)\\(\delta \ast e_\Rc)(t)\end{pmatrix}\right),\left(\begin{pmatrix}-\ddt (\delta \ast x)(t)\\(\delta \ast f_\Rc)(t)\\(\delta \ast f_\Pc)(t)\end{pmatrix},\begin{pmatrix}(\delta \ast \mathcal{H}'(x))(t)\\(\delta \ast e_\Rc)(t)\\(\delta \ast e_\Rc)(t)\end{pmatrix}\right)\right\rangle\!\right\rangle}}_{=0}{\rm d}t\\
+\int_{t_0}^{t_1}\underbrace{\langle (\delta \ast f_\Rc)(t),(\delta \ast e_\Rc)(t)\rangle_{\Fc_\Rc,\Fc_\Rc'}}_{\leq 0}{\rm d}t+\int_{t_0}^{t_1}\langle (\delta \ast f_\Pc)(t),(\delta \ast e_\Pc)(t)\rangle_{\Fc_\Pc,\Fc_\Pc'}{\rm d}t\\
+\int_{t_0}^{t_1}\langle \ddts(\delta \ast x)(t),\mathcal{H}'((\delta \ast x)(t))-(\delta \ast \mathcal{H}'(x))(t)\rangle_{\Xc,\Xc'}{\rm d}t. 
\end{multline*}
Now invoking that, by the chain rule,
\begin{multline*}
\forall\,t_0,t_1\in I\text{ s.t.\ }t_0\leq t_1:\\  \int_{t_0}^{t_1}\langle \ddts (\delta \ast x)(t), \mathcal{H}'((\delta \ast x)(t))\rangle_{\Xc,\Xc'}{\rm d}t=\mathcal{H}((\delta \ast x)(t_1))-\mathcal{H}((\delta \ast x)(t_0)),
\end{multline*}
the result is proven.
\end{proof}
Next we prove the energy balance under two alternative scenarios. One of them is quadraticity, which is defined below. 
\begin{definition}[Quadratic function]
Let $\Xc$ be a~Banach space. A~function $\mathcal{H}:\Xc\to\R$ is called {\em quadratic}, if     
\begin{equation}
\forall\,x\in\Xc:\quad \mathcal{H}(x)=\langle x,\tfrac12Hx+b\rangle_{\Xc,\Xc'}+c\label{eq:quadr}
\end{equation}
for some $b\in \Xc'$, $c\in\R$ and a~linear bounded operator $H:\Xc\to\Xc'$ with the property that it is self-dual in the sense that
\[\forall\,x,y\in\Xc:\quad \langle x,Hy\rangle_{\Xc,\Xc'}=
\langle y,Hx\rangle_{\Xc,\Xc'}.\]
\end{definition}
Next we prove an energy balance for weak solutions.
\begin{theorem}
Assume the conditions of \Cref{def-pH} hold, and let
$(x, f_\Rc, f_\Pc, e_\Rc, e_\Pc)$ be a weak solution of \eqref{eq:phinc} on $I$, with
$f_\Rc \in \Lp{2}_{\loc}(I; \Fc_\Rc)$,
$e_\Rc \in \Lp{2}_{\loc}(I; \Fc_\Rc')$,
$f_\Pc \in \Lp{2}_{\loc}(I; \Fc_\Pc)$, and
$e_\Pc \in \Lp{2}_{\loc}(I; \Fc_\Pc')$.
In addition, assume that at least one of the following two conditions is satisfied:
\begin{enumerate}[label=(\alph*)]
    \item $\mathcal{H}$ is quadratic, or
    \item the space of occurring co-energy variables is reflexive, its closure is splitting subspace of $\Xc'$, $\mathcal{H}$ extends to a~continuously differentiable mapping  $\widetilde{\mathcal{H}}:\Xc_{-1}\to\R$, and $x\in \Wkp{1,2}_{\loc}(I;\Xc_{-1})$, with $\Xc_{-1}$ as in \Cref{rem:splitting}. 
\end{enumerate}
Then
\begin{multline}
\forall\,t_0,t_1\in I\text{ with }t_0\leq t_1:\quad\mathcal{H}(x(t_1)) - \mathcal{H}(x(t_0))\\ =
\int_{t_0}^{t_1} \langle f_\Rc(t), e_\Rc(t) \rangle_{\Fc_\Rc, \Fc_\Rc'}\,{\rm d}t
+ \int_{t_0}^{t_1} \langle f_\Pc(t), e_\Pc(t) \rangle_{\Fc_\Pc, \Fc_\Pc'}\,{\rm d}t,
\label{eq:enbal}
\end{multline}
and
\begin{equation}
\forall\,t_0,t_1\in I\text{ with }t_0\leq t_1:\qquad
\mathcal{H}(x(t_1)) - \mathcal{H}(x(t_0)) \leq 
\int_{t_0}^{t_1} \langle f_\Pc(t), e_\Pc(t) \rangle_{\Fc_\Pc, \Fc_\Pc'}\,{\rm d}t.
\label{eq:enbalineq}
\end{equation}
\end{theorem}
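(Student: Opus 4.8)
The plan is to push everything through a time mollification and then pass to the limit. For $\varepsilon>0$ choose a standard mollifier $\delta_\varepsilon$ (nonnegative and even, $\supp\delta_\varepsilon\subset[-\varepsilon,\varepsilon]$, $\int_\R\delta_\varepsilon\,{\rm d}t=1$) and let $I_\varepsilon$ be as in \eqref{eq:Ieps}. For $t_0\le t_1$ in $I_\varepsilon$ with $\varepsilon$ small enough that also $(\delta_\varepsilon\ast x)(t)\in U$ on $[t_0,t_1]$ (possible since $x([t_0,t_1])\subset U$ is compact and $\delta_\varepsilon\ast x\to x$ uniformly), \Cref{lem:conv2} gives
\[\mathcal{H}((\delta_\varepsilon\ast x)(t_1))-\mathcal{H}((\delta_\varepsilon\ast x)(t_0))=\int_{t_0}^{t_1}\langle\delta_\varepsilon\ast f_\Rc,\delta_\varepsilon\ast e_\Rc\rangle_{\Fc_\Rc,\Fc_\Rc'}\,{\rm d}t+\int_{t_0}^{t_1}\langle\delta_\varepsilon\ast f_\Pc,\delta_\varepsilon\ast e_\Pc\rangle_{\Fc_\Pc,\Fc_\Pc'}\,{\rm d}t+R_\varepsilon,\]
with the commutator remainder $R_\varepsilon=\int_{t_0}^{t_1}\langle\ddts(\delta_\varepsilon\ast x)(t),\,\mathcal{H}'((\delta_\varepsilon\ast x)(t))-(\delta_\varepsilon\ast\mathcal{H}'(x))(t)\rangle_{\Xc,\Xc'}\,{\rm d}t$. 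As $x\in\conC(I;\Xc)$, $\delta_\varepsilon\ast x\to x$ uniformly on compacta in $\Xc$, so $\mathcal{H}((\delta_\varepsilon\ast x)(t_i))\to\mathcal{H}(x(t_i))$ by continuity of $\mathcal{H}$; and $\delta_\varepsilon\ast f_\Rc\to f_\Rc$, $\delta_\varepsilon\ast e_\Rc\to e_\Rc$ in $\Lp{2}_{\loc}$ (likewise the $\Pc$-pair) together with boundedness of the duality pairings makes the two power integrals converge to $\int_{t_0}^{t_1}\langle f_\Rc,e_\Rc\rangle\,{\rm d}t$ and $\int_{t_0}^{t_1}\langle f_\Pc,e_\Pc\rangle\,{\rm d}t$. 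Hence everything reduces to proving $R_\varepsilon\to0$. (Because the mollification lives only on $I_\varepsilon$, this first yields \eqref{eq:enbal} for $t_0,t_1$ in the interior of $I$; since $t\mapsto\mathcal{H}(x(t))$ is continuous on $I$ and the two power densities lie in $\Lp{1}_{\loc}(I)$, both sides of \eqref{eq:enbal} are continuous in $(t_0,t_1)$, and the case of general $t_0\le t_1$ in $I$ follows by approximation.)

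Under hypothesis (a) the remainder vanishes identically. A quadratic $\mathcal{H}$ as in \eqref{eq:quadr} has the affine derivative $\mathcal{H}'(y)=Hy+b$ (self-duality of $H$ is exactly what lets the two cross terms combine), and $H$, being bounded and linear, commutes with the Bochner integral; hence $(\delta_\varepsilon\ast\mathcal{H}'(x))(t)=H(\delta_\varepsilon\ast x)(t)+b=\mathcal{H}'((\delta_\varepsilon\ast x)(t))$ (using $\int_\R\delta_\varepsilon=1$), so the integrand of $R_\varepsilon$ is zero.

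Hypothesis (b) is the main obstacle. The difficulty is that $\ddts(\delta_\varepsilon\ast x)$ converges only in the larger space $\Xc_{-1}$, not in $\Xc$, so $R_\varepsilon$ cannot be estimated naively in the $\langle\cdot,\cdot\rangle_{\Xc,\Xc'}$-pairing. The remedy is to rewrite the integrand using the extension $\widetilde{\mathcal{H}}$: with $j\colon\Xc\hookrightarrow\Xc_{-1}$ the dense embedding of \Cref{rem:splitting}, its adjoint $j'\colon\Xc_{-1}'\to\Xc'$ is injective, so $\Xc_{-1}'$ sits inside $\Xc'$ with matching pairings, and the chain rule applied to $\widetilde{\mathcal{H}}\circ j$ gives $\mathcal{H}'(y)=\widetilde{\mathcal{H}}'(y)\circ j$ for $y\in U$, i.e.\ $\mathcal{H}'(y)$ is the image in $\Xc'$ of $\widetilde{\mathcal{H}}'(y)\in\Xc_{-1}'$. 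Thus the $R_\varepsilon$-integrand equals $\langle(\delta_\varepsilon\ast\dot x)(t),\,\widetilde{\mathcal{H}}'((\delta_\varepsilon\ast x)(t))-(\delta_\varepsilon\ast\widetilde{\mathcal{H}}'(x))(t)\rangle_{\Xc_{-1},\Xc_{-1}'}$, where $\dot x\in\Lp{2}_{\loc}(I;\Xc_{-1})$ is the weak derivative of $x$ regarded as $\Xc_{-1}$-valued and $\ddts(\delta_\varepsilon\ast x)=\delta_\varepsilon\ast\dot x$ on $I_\varepsilon$ (using $x\in\Wkp{1,2}_{\loc}(I;\Xc_{-1})$). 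Now $\delta_\varepsilon\ast\dot x\to\dot x$ in $\Lp{2}([t_0,t_1];\Xc_{-1})$, hence these are $\Lp{2}$-bounded; meanwhile $\widetilde{\mathcal{H}}'(x(\cdot))\colon I\to\Xc_{-1}'$ is continuous, so $\delta_\varepsilon\ast\widetilde{\mathcal{H}}'(x)\to\widetilde{\mathcal{H}}'(x(\cdot))$ uniformly on $[t_0,t_1]$, and $\delta_\varepsilon\ast x\to x$ uniformly in $\Xc_{-1}$ on $[t_0,t_1]$ combined with uniform continuity of $\widetilde{\mathcal{H}}'$ on a compact neighbourhood of $x([t_0,t_1])$ gives $\widetilde{\mathcal{H}}'(\delta_\varepsilon\ast x)\to\widetilde{\mathcal{H}}'(x(\cdot))$ uniformly there. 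So the second factor tends to $0$ in $\Lp{2}([t_0,t_1];\Xc_{-1}')$, and Cauchy--Schwarz gives $R_\varepsilon\to0$.

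Letting $\varepsilon\to0$ in the mollified identity then yields \eqref{eq:enbal}, and \eqref{eq:enbalineq} is immediate: by \Cref{def:pH1}\,\ref{def:pH13} and the defining inequality of a resistive relation in \Cref{def-pH}, $\langle f_\Rc(t),e_\Rc(t)\rangle_{\Fc_\Rc,\Fc_\Rc'}\le0$ for almost all $t$, whence $\int_{t_0}^{t_1}\langle f_\Rc,e_\Rc\rangle\,{\rm d}t\le0$. I expect the only delicate points to be the pairing identification in case (b) — checking that $\mathcal{H}'(y)$ really lands in $\Xc_{-1}'\subset\Xc'$, and that the $\Xc$-valued derivative $\ddts(\delta_\varepsilon\ast x)$ coincides, after the embedding, with $\delta_\varepsilon\ast\dot x$ — together with the routine endpoint bookkeeping for $I$.
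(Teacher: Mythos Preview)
Your proposal is correct and follows essentially the same route as the paper: mollify in time, invoke \Cref{lem:conv2}, pass to the limit in the boundary terms and power integrals via $\Lp{2}$-convergence, and show the commutator remainder $R_\varepsilon$ vanishes—identically in case (a) because the affine derivative commutes with convolution, and in case (b) by rewriting the pairing in $\Xc_{-1}\times\Xc_{-1}'$ so that one factor is $\Lp{2}$-bounded while the other tends to zero. Your write-up is in fact more careful than the paper's on several points it leaves implicit: the endpoint extension from $\mathring I$ to $I$, the check that $(\delta_\varepsilon\ast x)(t)\in U$, the pairing identification $\mathcal{H}'(y)=\widetilde{\mathcal{H}}'(y)\circ j$, and the explicit derivation of \eqref{eq:enbalineq} from the resistive inequality.
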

\begin{proof}
We prove the statement separately for the two cases. Both proofs rely on a mollification of the weak solutions. To this end, let $\delta \in \conC_0^\infty(I)$ be a nonnegative function with $\supp \delta \subset [-1, 1]$ and $\delta(t) = \delta(-t)$ for all $t \in \R$. Define $\delta_n \in \conC_0^\infty(I)$ by $\delta_n(t) = n \delta(nt)$ for $t \in \R$.
Further, define $x_n = \delta \ast x$,  $f_{\Rc,n} = \delta \ast f_{\Rc}$,  
$e_{\Rc,n} = \delta \ast e_{\Rc}$, $f_{\Pc,n} = \delta \ast f_{\Pc}$,  
$e_{\Pc,n} = \delta \ast e_{\Pc}$, and  
\[
I_n = \setdef{t \in I}{[t - 1/n,\, t + 1/n] \subset I}.
\]  
Then \Cref{lem:conv2} yields  
\begin{multline}
    \forall\, t_0, t_1 \in I_n \text{ with } t_0 \leq t_1:\quad \mathcal{H}(x_n(t_1)) - \mathcal{H}(x_n(t_0)) \\
    = \int_{t_0}^{t_1} \langle f_{\Rc,n}(t), e_{\Rc,n}(t) \rangle_{\Fc_\Rc, \Fc_\Rc'}\,{\rm d}t
    + \int_{t_0}^{t_1} \langle f_{\Pc,n}(t), e_{\Pc,n}(t) \rangle_{\Fc_\Pc, \Fc_\Pc'}\,{\rm d}t \\
    + \int_{t_0}^{t_1} \langle \ddts x_n(t),\, \mathcal{H}'(x_n(t)) - (\delta_n \ast \mathcal{H}'(x))(t) \rangle_{\Xc, \Xc'}\,{\rm d}t.
    \label{eq:enbal_appr}
\end{multline}
To prove the desired result, we have to show that the last summand in \eqref{eq:enbal_appr} tends to zero. This is done separately for both cases.
\begin{enumerate}[label=(\alph*)]
    \item 
By the mean value theorem for integrals, we obtain that $x_n(t)$ converges to $x(t)$ in $\Xc$ for all $t \in \mathring{I}$. Furthermore, by \cite[Thm.~4.15]{Alt16}, the sequences $f_{\Rc,n}$, $e_{\Rc,n}$, $f_{\Pc,n}$, and $e_{\Pc,n}$ converge in $\Lp{2}$ on any compact subinterval of $I$ to $f_{\Rc}$, $e_{\Rc}$, $f_{\Pc}$, and $e_{\Pc}$, respectively. Since
$\mathcal{H}$ fulfills \eqref{eq:quadr}
for some $b\in \Xc'$, $c\in\R$ and a~linear bounded self-dual operator $H:\Xc\to\Xc'$, we have that, for all $t_0,t_1\in\mathring{I}$ with $t_0\leq t_1$,
\[
\mathcal{H}'(x_n(t))-(\delta_n\ast \mathcal{H}'(x))(t)
=Hx_n(t)+b-\delta_n\ast Hx-b= H(\delta_n\ast x)-\delta_n\ast Hx=0.
\]
\item 
 We have
\begin{multline*}
\int_{t_0}^{t_1} \langle \ddts x_n(t), \mathcal{H}'(x_n(t))-(\delta_n\ast \mathcal{H}'(x))(t) \rangle_{\Xc, \Xc'}\,{\rm d}t \\
=\int_{t_0}^{t_1} \langle \ddts x_n(t), \widetilde{\mathcal{H}}'(x_n(t))-(\delta_n\ast \widetilde{\mathcal{H}}'(x))(t) \rangle_{\Xc, \Xc'}\,{\rm d}t \\
=\int_{t_0}^{t_1} \langle \ddts x_n(t), \widetilde{\mathcal{H}}'(x_n(t))-(\delta_n\ast \widetilde{\mathcal{H}}'(x))(t) \rangle_{\Xc_{-1},\Xc_{-1}'}\,{\rm d}t \\
\end{multline*}
Since $x_n \in \Wkp{1,2}(I;\Xc_{-1})$, it follows that the sequence $(\ddt x_n)$ converges in $\Lp{2}(I;\Xc_{-1})$. Moreover, since $\widetilde{\mathcal{H}}' : \Xc_{-1} \to \Xc_{-1}'$ is continuous, the dominated convergence theorem implies that
\[
\widetilde{\mathcal{H}}'(x_n) - (\delta_n \ast \widetilde{\mathcal{H}}'(x))
\]
converges to zero in $\Lp{2}(I;\Xc_{-1}')$. This concludes the proof of the desired result.
\end{enumerate}
\end{proof}

\section{Examples}\label{sec:Ex}

We present two examples to illustrate the concept of weak solutions for port-Hamiltonian systems.
The first example is a~nonlinear elasticity problem in a~one-dimensional spatial domain.
As a second example, we consider a diffusion equation on a higher-dimensional spatial domain.
In both cases, the systems are infinite-dimensional and impose no constraints on the co-energy variables (i.e., $\overline{\Ec} = \Xc'$).
It turns out that our notion of weak solution coincides with the classical weak solution in the sense of standard PDE theory.

\subsection{Nonlinear vibrating string}

Consider a vibrating string %(see \cite[Ex.~7.1.1]{JZ12}) 
with nonlinear elasticity.
We assume that the mass density $\rho:[a,b]\to\R$ is positive with $\rho^{-1}\in\Lp{\infty}([a,b])$.
The restoring force depends on the spatial variable $\xi\in[a,b]$ and on the strain $\bm{\epsilon}(t,\xi)=\tfrac{\partial \bm{w}}{\partial \xi}(t,\xi)$.
This strain is simply the spatial derivative of the displacement $\bm{w}$.
We denote the restoring force function by $\bm{F}:[a,b]\times \R\to\R$. Suppose there exists $L>0$ such that, for each $\xi\in[a,b]$, the mapping $\epsilon\mapsto F(\xi,\epsilon)$ is Lipschitz continuous with Lipschitz constant $L$.
Then it follows that
\begin{align*}
\bm{F}:\quad \Lp{2}([a,b])&\to \Lp{2}([a,b]),\\
\bm{\epsilon}&\mapsto \big(\xi\mapsto \bm{F}(\xi,\bm{\epsilon}(\xi))\big)
\end{align*}
is a well-defined mapping.
Moreover, $\bm{F}$ is the Fr\'echet derivative of the functional
\begin{align*}
\bm{\Psi}:\quad \Lp{2}([a,b])&\to \R,\\
\epsilon(\cdot)&\mapsto \int_a^b\int_0^{\epsilon(\xi)}F(\xi,\zeta){\rm d}\zeta,{\rm d}\xi
\end{align*}
which represents the potential energy. The overall system is then described by the PDE
\[
\frac{\partial}{\partial t} \begin{pmatrix} \bm{p}(t,\xi) \\ \bm{\epsilon}(t,\xi) \end{pmatrix} = \begin{bmatrix} 0 & 1 \\ 1 & 0 \end{bmatrix} \frac{\partial}{\partial \xi} \begin{pmatrix} \rho(\xi)^{-1}\bm{p}(t,\xi) \\ \bm{F}(\xi,\bm{\epsilon}(t,\xi)) \end{pmatrix},
\]
where $\bm{p}(t,\xi)$ is the infinitesimal momentum at $(t,\xi)$. 

% \begin{equation}
%     \label{eq:vibstr}\tfrac{\partial^2 w}{\partial t^2}(t,\xi) = \tfrac{1}{\rho(\xi)} \tfrac{\partial}{\partial \xi} \left( T(\xi) \tfrac{\partial w}{\partial \xi}(t,\xi) \right),\quad \xi\in[a,b],\ t\ge0,
% \end{equation}
% where $w(t,\xi)$ is the displacement of the string at position $\xi$ and time $t$, $T$ is Young’s modulus of the string, and $\rho$ is the mass density, which may vary along the string. We define 
% \begin{align*}
% x_1(t,\xi) &= \rho(\xi) \tfrac{\partial w}{\partial t} (t,\xi), & \text{transverse momentum,}\\
% x_2(t,\xi) &= \tfrac{\partial w}{\partial \xi} (t,\xi), & \text{strain.}
% \end{align*}
%Then \eqref{eq:vibstr} can equivalently be written as
Furthermore, we consider boundary values, which, as will become clear later, are treated as external ports. These are given by
\begin{align*}
    f_\Pc(t)&=\left(\begin{smallmatrix}
        -\bm{F}(\xi,\bm{\epsilon}(t,\xi))\vert_{\xi=a}\\\phantom{-}\bm{F}(\xi,\bm{\epsilon}(t,\xi))\vert_{\xi=b}
    \end{smallmatrix}\right),&\text{strain forces},\\
e_\Pc(t)&=\left(\begin{smallmatrix}
        (\rho^{-1} \bm{p})(a,t)\\(\rho^{-1} \bm{p})(b,t)    \end{smallmatrix}\right),\qquad&\text{displacement velocities}.
\end{align*}
The above equations indeed form a~port-Hamiltonian system in the sense of \Cref{def-pH}, with state \[x(t)=\spvek{\bm{p}(t)}{\bm{\epsilon}(t)}:=\spvek{\bm{p}(t,\cdot)}{\bm{\epsilon}(t,\cdot)}\in\Lp{2}([a,b];\R^2),\]
and Hamiltonian given by the sum of potential and kinetic energy, i.e.,
\[\mathcal{H}:\Lp{2}([a,b])\to\R,\quad \mathcal{H}(\bm{p},\bm{\epsilon})=\int_a^b\rho(\xi)^{-1}\bm{p}(\xi)^2{\rm d}\xi+\Psi(\bm{\epsilon}).\]
The underlying Dirac structure is
\begin{multline}\label{eq:Diracwave}    \Dc=\setdef{\left(\begin{pmatrix}\pddxi{}\ek_2\\\pddxi{}\ek_1\\-\ek_2(a)\\\phantom{-}\ek_2(b)\end{pmatrix},\begin{pmatrix}\ek_1\\\ek_2\\\ek_1(a)\\\ek_1(b)\end{pmatrix}\right)}{\ek_1,\ek_2\in \Hk{1}([a,b])}\\\subset \big(\Lp{2}([a,b])^2\times\R^2\big)\times \big(\Lp{2}([a,b])^2\times\R^2\big).
\end{multline}
This is indeed a Dirac structure, as shown in \cite{le2005dirac}.  
Here, the Hilbert spaces $\Lp{2}([a,b])$ and $\R$ are identified with their respective duals (which is possible by the Riesz representation theorem \cite[Sec.~6.1]{Alt16}).  
The resistive port is trivial in this example,  i.e., $\Fc = \{0\}$. Hence it is omitted in the sequel.  
We further have $\Xc = \Xc' = \Lp{2}([a,b])^2$ and $\Fc_\Pc = \Fc_\Pc' = \R^2$.

Let us now investigate weak solutions.
Namely, according to \Cref{def:pH1}, we have, for some interval $I\subset\R$, that
\[\left(\begin{psmallmatrix}
    \bm{p}\\[1mm]\bm{\epsilon}
\end{psmallmatrix},\begin{psmallmatrix}
    f_{\Pc1}\\f_{\Pc2}
\end{psmallmatrix},\begin{psmallmatrix}
    e_{\Pc1}\\[1mm]e_{\Pc2}
\end{psmallmatrix}\right)\]
is a~solution, if $\bm{p},\bm{\epsilon}:I\to \Lp{2}([a,b])$ are continuous $e_{\Pc1},e_{\Pc2},f_{\Pc1},f_{\Pc2}:I\to\R$ are locally integrable, and, for $\bm{F}(t):=\bm{F}(\cdot,\bm{\epsilon}(\cdot))$, $\bm{v}(t):=\rho(\cdot)^{-1}\bm{p}(t,\cdot)$ it holds for all $\varrho_1,\varrho_2\in \conC_0^1(I;\Hk{1}([a,b]))$ that
\[    \int_I 
    \left\langle\!\left\langle\left(\begin{pmatrix}\bm{p}(t)\\\bm{\epsilon}(t)\\f_{\Pc1}(t)\\f_{\Pc2}(t)
    \end{pmatrix},\begin{pmatrix}\bm{v}(t)\\\bm{F}(t)\\e_{\Pc1}(t)\\e_{\Pc2}(t)
    \end{pmatrix}\right),\left(\begin{pmatrix}\pddxi{}\varrho_2(t)\\\pddxi{}\varrho_1(t)\\-(\varrho_2(t))(a)\\\phantom{-}(\varrho_2(t))(b)
    \end{pmatrix},\begin{pmatrix}\ddt \varrho_1(t)\\\ddt \varrho_2(t)\\\phantom{-}(\varrho_1(t))(a)\\\phantom{-}(\varrho_1(t))(b)\end{pmatrix}\right)\right\rangle\!\right\rangle
{\rm d}t=0.
\]
Let us denote $\varrho(t,\xi):=(\varrho(t))(\xi)$. Then we see that weak solutions fulfill, for all $\varrho_1,\varrho_2\in \conC_0^1(I;\Hk{1}[a,b])$, it holds that
\begin{multline*}
    \int_I\int_a^b \bm{p}(t,\xi)\pddt\varrho_1(t,\xi)+ \bm{\epsilon}(t,\xi)\pddt\varrho_2(t,\xi)+\bm{v}(t,\xi)\pddxi{}\varrho_2(t,\xi)
    \\+\bm{F}(t,\xi){\textstyle\pddxi{}}\varrho_1(t,\xi){\rm d}\xi+f_{\Pc1}(t)\varrho_1(t,a)+f_{\Pc2}(t)\varrho_1(t,b)\\
    -e_{\Pc1}(t)\varrho_2(t,a)+e_{\Pc2}(t)\varrho_2(t,b)
{\rm d}t=0.
\end{multline*}
If we successively choose $\varrho_1\in \conC_0^1(I;\Hk{1}_0([a,b]))$ with $\varrho_2=0$, and then $\varrho_1=0$ with $\varrho_2\in \conC_0^1(I;\Hk{1}_0([a,b]))$, the boundary terms in the above integral vanish.
This yields that the two equations
\begin{align*}
\pddt\bm{p}(t,\xi) &= -\pddxi{}\bm{F}(t,\xi),\\
\pddt\bm{\epsilon}(t,\xi) &= -\pddxi{}\bm{v}(t,\xi),
\end{align*}
hold in the weak sense, that is, now, in the sense of classical PDE theory \cite{Evans10}. General test functions $\varrho_1, \varrho_2 \in \conC_0^1(I; \Hk{1}([a,b]))$ ensure that the boundary values are correctly imposed.

We further take a look at the extrapolation space that is introduced in \Cref{sec:extr}.
The representation \eqref{eq:Diracwave} directly implies that the space of occurring co-energy variables is given by $\mathcal{E}=\Hk{1}([a,b])^2$, and the norm \eqref{eq:EDnorm} is equivalent to the standard one in $\Hk{1}([a,b])^2$. Since the latter is dense in $\Lp{2}([a,b])^2$, we have a~trivial annihilator, that is, $\leftidx{^\bot}{\Ec}=\{0\}$. This implies that the canonical projection $\iota$ in \eqref{eq:iotadef} is the identical map. Since all involved spaces are Hilbert spaces, they are all reflexive. Consequently, we can apply \Cref{thm:extr} to see that the derivative of the state in a~weak solution evolves in the dual of $\mathcal{E}$, which is given by the~Sobolev space $\Hk{-1}_0([a,b])^2$ with negative exponent \cite{Adam03}. This particularly leads to
\[\bm{p},\bm{\epsilon}\in \Wkp{1,1}(I;\Hk{-1}_0([a,b])).\]
\subsection{Diffusion equation}
Let us consider the diffusion equation on some Lipschitz domain $\Omega\subset\R^d$. The external port is given by the Dirichlet and Neumann trace at $\partial\Omega$. That is, for some $a\in L^\infty(\Omega;\R^{d\times d})$ with values in the cone of positive definite matrices and the additional property that $a^{-1}\in L^\infty(\Omega;\R^{d\times d})$, we consider
\begin{align*}
    \dot{x}(t,\xi)&=\divg \big(a(\xi)\grad x(t,\xi)\big),&&t\ge0,\,\xi\in\Omega,\\
    f_{\Pc}(t,\xi)&={x}(t,\xi),&&t\ge0,\,\xi\in\partial\Omega,\\
    e_{\Pc}(t,\xi)&=n(\xi)^\top a(\xi)\grad {x}(t,\xi),&&t\ge0,\,\xi\in\partial\Omega,
\end{align*}
where $n(\xi)$ is the outward normal vector of $\Omega$ at $\xi\in \partial\Omega$. By introducing  $e_\Rc=-a\grad x$, $f_\Rc=\grad x$, we rewrite the above system as
\begin{align*}
    -\dot{x}(t,\xi)&=\divg e_\Rc(t,\xi),&&t\ge0,\,\xi\in\Omega,\\
    f_{\Rc}(t,\xi)&=\grad x(t,\xi),&&t\ge0,\,\xi\in\Omega,\\
    e_{\Pc}(t,\xi)
    &=-n(\xi)^\top e_{\Rc}(t,\xi),&&t\ge0,\,\xi\in\partial\Omega,\\
    f_{\Pc}(t,\xi)&=x_{\Rc}(t,\xi),&&t\ge0,\,\xi\in\partial\Omega,\\[2mm]
    a(\xi)f_{\Rc}(t,\xi)&=e_{\Rc}(t,\xi),&&t\ge0,\,\xi\in\Omega.
\end{align*}
This is a~port-Hamiltonian system with spaces $\Xc=\Xc'=\Lp{2}(\Omega)$, $\Fc_\Rc=\Fc_\Rc'=\Lp{2}(\Omega;\R^d)$, $\Fc_\Pc=\Hk{1/2}(\partial\Omega)$, $\Fc_\Pc'=\Hk{-1/2}(\partial\Omega)$, and Hamiltonian $\mathcal{H}:\Xc\to\R$ with $\mathcal{H}(x)=\tfrac12\|x\|^2_{\Lp{2}(\Omega)}$. Further, the resistive relation is given by
\[\Rc=\setdef{(\fk,\ek)\in \Lp{2}(\Omega;\R^d)\times \Lp{2}(\Omega;\R^d)}{a\fk=-\ek},\]
and the Dirac structure reads
\begin{equation}
\Dc=\setdef{\left(\begin{pmatrix}
    \divg \ek_\Rc\\\grad\ek\\\gamma_n\ek_\Rc
\end{pmatrix},\begin{pmatrix}
    \ek\\\ek_{\Rc}\\\gamma \ek
\end{pmatrix}\right)}{\ek_\Rc\in \mathrm{H}_{\divg}(\Omega),\,\ek\in \Hk{1}(\Omega)},\label{eq:heatDirac}
\end{equation}
where $\mathrm{H}_{\divg}(\Omega)$ denotes the space of all elements of $\Lp{2}(\Omega;\R^d)$ whose weak divergence is in $\Lp{2}(\Omega)$, $\gamma:\Hk{1}(\Omega)\to \Hk{1/2}(\Omega)$ is the trace operator,  
and $\gamma_n:\mathrm{H}_{\divg}(\Omega)\to \Hk{-1/2}(\Omega)$ is the normal trace operator, see
\cite{Tartar2007}. Note that this is indeed a Dirac structure, as can be seen using the results from \cite{BHM23}. The structure above is the same as the one used for the higher-dimensional wave equation as considered in \cite{BHM23}, with the subtle but important difference that the second port is used for the resistive part rather than for another state.

To determine the properties of weak solutions of the above system, we make use of \Cref{def:pH1} to see that, for some interval $I\subset\R$, that weak solutions
$(x, f_\Rc, f_\Pc, -e_\Rc, e_\Pc)$
fulfill that $x:I\to \Lp{2}(\Omega)$ is  continuous, $e_{\Rc},f_{\Rc}:I\to \Lp{2}(\Omega;\R^d)$ with $e_\Rc=-af_\Rc$, $f_{\Pc}:I\to \Hk{1/2}(\partial\Omega)$ and $e_{\Pc}:I\to \Hk{-1/2}(\partial\Omega)$ are locally integrable, and
it holds for all $\varrho\in \conC_0^1(I;\Hk{1}(\Omega))$, $\varrho_{\Rc}\in \conC_0^1(I;\Hk{1}(\Omega;\R^d))$ that
\[    \int_I 
    \left\langle\!\left\langle \left(\begin{psmallmatrix}
    x\\f_\Rc\\f_\Pc
\end{psmallmatrix},\begin{psmallmatrix}
    x\\-af_\Rc\\e_{\Pc}
\end{psmallmatrix}\right),\left(\begin{psmallmatrix}\divg{}\varrho_\Rc(t)\\\grad{}\varrho(t)\\\gamma_n \varrho_\Rc(t)
    \end{psmallmatrix},\begin{psmallmatrix}\ddt \varrho(t)\\\varrho_\Rc(t)\\\gamma\varrho(t)\end{psmallmatrix}\right)\right\rangle\!\right\rangle
{\rm d}t=0.
\]
An expansion of the above integral yields
\begin{multline*}
    0=\int_{I}\int_{\Omega}x(t,\xi)\big(\pddt \varrho(t,\xi)+\divg{}\varrho_\Rc(t,\xi)\big)+f_\Rc(t,\xi)\big(\varrho_\Rc(t,\xi)-a(\xi)\grad \varrho(t,\xi)\big){\rm d}\xi\\+\int_{\partial\Omega}f_\Pc(t,\xi)\varrho(t,\xi)+e_\Pc(t,\xi)n(\xi)^\top\varrho_\Rc(t,\xi)
    {\rm d}\xi {\rm d}t
\end{multline*}
Now choosing $\varrho_\Rc\equiv0$ we obtain for all $\varrho\in \conC_0^1(I;C_0^\infty(\Omega))$ that
\[    0=\int_{I}\int_{\Omega}x(t,\xi)\pddt \varrho(t,\xi)-f_\Rc(t,\xi)a(\xi)\grad \varrho(t,\xi)\big){\rm d}\xi{\rm d}t.
\]
which particularly means that the distributional gradient of $af_\Rc$ equals to the distributional time derivative of $x$. On the other hand, 
the choice $\varrho\equiv0$ we obtain for all $\varrho:\Rc\in \conC_0^1(I;C_0^\infty(\Omega))$ that
\[    0=\int_{I}\int_{\Omega}x(t,\xi)\divg{}\varrho_\Rc(t,\xi)+f_\Rc(t,\xi)\varrho_\Rc(t,\xi){\rm d}\xi{\rm d}t,\]
which means that the distributional gradient of $x$ equals to $f_\Rc$. Moreover, without going into details, the boundary terms in the overall integral expansion mean that, in the weak sense, the normal trace of $x$ is given by $f_{\Pc}$, while the boundary trace of $x$ is given by $e_{\Pc}$.

It can be directly seen from \eqref{eq:heatDirac} that the space of occurring co-energy variables is given by $\Hk{1}(\Omega)$, whence the extrapolation space is given by 
$\Hk{-1}_0(\Omega):=\Hk{1}(\Omega)'$. For the same reasons as in the previous example, we can use \Cref{thm:extr} to infer that the weak solutions fulfill
\[x\in \Wkp{1,1}(I;\Hk{-1}_0(\Omega)).\]

\newpage

%\section{Conclusion}

\bibliographystyle{siamplain}
\bibliography{pH-weak}

\end{document}